 \documentclass[a4paper,11pt]{article}
 \pdfoutput=1
 \usepackage{amsmath, amssymb, latexsym, amscd, amsthm,amsfonts,amstext}
 \usepackage[mathscr]{eucal}
 \usepackage{graphicx}
 \usepackage{subfig}



 \textwidth = 15cm
 \textheight = 22cm
 \topmargin = 0cm
 \headsep =20pt
 \oddsidemargin = 15pt
 \evensidemargin = -15pt


 \newtheorem{theorem}{Theorem}[section]
 \newtheorem{lemma}[theorem]{Lemma}
 \newtheorem{definition}[theorem]{Definition}
 \newtheorem{algorithm}[theorem]{Algorithm}
\newtheorem{remark}{Remark}[section]


    \newtheoremstyle{example}{\topsep}{\topsep}%
      {}
      {}
      {\bfseries}
      {.}
      {5pt}
      {\thmname{#1}\thmnumber{ #2}\thmnote{ #3}}

    \theoremstyle{example}

%

 \pagestyle{myheadings}


 \title{Convergence rates of recursive Newton-type methods for multifrequency scattering problems}
 \author{Mourad Sini$^a$ and Nguyen Trung Th\`anh$^b$\\
$^a$Johann Radon Institute for Computational and Applied Mathematics (RICAM),\\
 Austrian Academy of Sciences, Linz, Austria. Email: mourad.sini@oeaw.ac.at \\
$^b$Department of Mathematics \& Statistics, University of North Carolina at Charlotte,\\
Charlotte 28223, NC, USA. Email: tnguy152@uncc.edu.  }
\date{}

 \begin{document}

 \maketitle

\begin{abstract}
 We are concerned with the reconstruction of a sound-soft obstacle using far field measurements of the scattered waves associated 
 with incident plane waves sent from one direction but at multiple frequencies. We define, for each frequency, the observable shape as
 the one which is described by finitely many modes and produces a far field pattern close to the measured one. In the first step, we propose a recursive Newton-type method 
 for the reconstruction of the observable shape at the highest frequency knowing an estimate of the observable shape at the lowest frequency. 
 We analyze its convergence and derive its convergence rate in terms of the frequency step, the number of the Newton iterations and the noise level.
 In the second step, we design a multilevel Newton method which has the same convergence rate as the one described in the first step but avoids the need of a 
 good estimate of the observable shape at the lowest frequency and a small frequency step (or a large number of Newton iterations).
 The performances of the proposed algorithms are illustrated with numerical results using simulated data. 
\end{abstract}

\textbf{Keywords}: Inverse obstacle scattering, multifrequency, convergence, Newton method.

\textbf{AMS classification codes:} 35R30, 65N21, 78A46.

%
%
%
%

\section{Introduction}\label{sec:ps}


We consider the problem of reconstructing the shape of a two-dimensional sound-soft acoustic obstacle using far field measurements associated with incident plane waves 
sent from only one incident direction but at multiple frequencies. The forward scattering problem can be represented by the following two-dimensional Dirichlet boundary value problem
\begin{eqnarray}
&&\Delta u(x) + k^2 u(x) = 0, \ x\in   \mathbb R^2\setminus \bar
D,\label{eq:1}\\
&&u(x) = 0, \ x\in \partial D,\label{eq:2}\\
&&\lim\limits_{|x| \to \infty } \sqrt{|x|} \left[ \frac{\partial
u^s(x)}{\partial |x|} - ik u^s(x)\right] = 0,\label{eq:3}
\end{eqnarray}
where $k$ is the wavenumber, $u$ is the total wave and $u^s := u - u^i$ is the
scattered wave.  Here, $u^i$ is the incident plane wave given by $u^i(x) :=
e^{ik
x\cdot \theta}$ with $\theta\in   \mathbb S^1 := \{ x\in   \mathbb R^2: |x|= 1\}$ being the direction of
incidence. The well-posedness of the problem \eqref{eq:1}--\eqref{eq:3} for each wavenumber $k$ is
well-known under the assumption that $\partial D$ is Lipschitz (see, e.g., \cite{Mclean:2000}). Moreover, we have the following asymptotic
behavior of
the scattered field $u^s$ at infinity
\begin{equation}\label{eq:4}
u^s(x) =  \frac{e^{ik |x|}}{\sqrt{|x|}}u^\infty(\hat x) +
O(|x|^{-3/2}), \ |x| \to \infty,
\end{equation}
where $\hat x := x/|x|$ and $u^\infty$ is an analytic function on $  \mathbb S^1$ referred
to as the \textit{far field pattern} of the scattered field $u^s$.

The inverse problem we investigate here is \textit{to reconstruct the
obstacle $D$
from measured far field patterns $u^\infty(\hat x,k),\ \hat x\in
  \mathbb S^1$, for \textbf{one} direction of incidence $\theta\in
\mathbb S^1$ and multiple wavenumbers $k$ in the interval
$[k_{l},k_{h}]$ ($0< k_{l} < k_{h}$)}. Here we denote the far field pattern by $u^\infty(\hat x, k)$ to emphasize its dependence on the
wavenumber $k$.

Let us recall some known results concerning this problem. It has a unique solution if a band of wavenumbers $[k_{l},k_{h}]$ is used, see, e.g., \cite{Ramm:1992}. 
If the measurements correspond to a finite number of frequencies, as we consider in this paper, then the uniqueness of the solution is guaranteed if the lowest frequency is small enough, see, e.g., \cite{C-S:IMA1983,Gintides:IP2005}. 
For local uniqueness at each frequency, we refer to  \cite{S-U:PAMS2004}. If more \textit{a priori}  information about the obstacle's shape is available, then some global uniqueness results at an arbitrary but 
fixed frequency have been published. For example, if the obstacles are polygonal, see \cite{A-R:PAMS2005,C-Y:CAMSB2004} and if thes obstacles are nowhere analytic, see \cite{H-N-S:pre2010}. 
Regarding the stability issue, \textit{loglog} stability estimates are given in \cite{Isakov:2006} and an improved \textit{log} stability estimate is shown in \cite{S-S:IPI2008}. 
In the high frequency regime, a conditional asymptotic H\"older stability estimate in the part of the boundary $\partial D$, of a convex obstacle $D$, illuminated by the 
incident plane wave $u^i$ is obtained in \cite{S-T:IPI2012}.

The main advantage of using multifrequency data is that it can help to obtain accurate reconstructions without the need for a good initial guess. 
Let us explain the reasons why we can expect these two features. The first one is that since the size of the domain in which the inverse problem is uniquely solvable is inversely proportional to the used frequency, 
the one in which the objective functional has a unique minimum is also inversely proportional to that frequency, as proved in \cite{S-T:IPI2012}. Due to this fact, if the lowest frequency used is small enough,  
it is not necessary to start from a good initial guess in solving for its unique minimum. 
The second one is related to the fact that, as discussed in \cite{B-I:RS1997}, for each frequency the dimension of the retrievable information is small which is due to the instability of the original problem. 
Therefore, at each frequency, we only need to choose a relatively small number of unknown parameters in the shape representation, 
which reduces the instability issue of the reconstruction problem. The third one is that at high frequencies the problem becomes more stable, i.e.~more details of the obstacle can be reconstructed. However,  
there are more local mimima of the objective functional. Using the reconstruction result at a lower frequency helps to avoid getting a false local minimum. 

Different reconstruction methods using multifrequency data have been proposed in the last two decades or so. The first type of method is known as frequency-hopping algorithms 
which use the reconstruction at a frequency as an initial guess at a higher frequency with the hope that this initial guess falls within the convergence domain of the objective 
functional. Several numerical results, using either simulated data, see e.g.~\cite{B-T:JCM2010, C-L:IEEE1995, Chen:IP1997, S-T:IPI2012}, or experimental data, 
see, e.g.~\cite{B-B-P-S-S:JEWA2000, T-B-L-H:IP2001, T-B-L-H:IEEETGRS2001}, have been demonstrated. However, convergence of this type of algorithms was only investigated 
in \cite{B-T:JCM2010, S-T:IPI2012} for the so-called recursive linearization algorithm (RLA) proposed in \cite{Chen:IP1997}. 
Another type of methods using multifrequency/multiwaves data, related to the sampling methods, can be found in \cite{Griesmaier:IP2011, G-C-B:IP2010, Potthast:IP2010}.

Inspired by the presentation in \cite{Chen:IP1997}, we define, for each frequency, the observable shape as
 the one which is described by finitely many modes and produces a far field pattern close to the measured one. Our goal then is to reconstruct the observable shape at the highest 
 available frequency $\kappa_h$. The link between this observable shape and the true one is related to the stability issue, see \cite{S-T:IPI2012} and section \ref{discussion--} for more explanation.
To achieve this goal, we proceed as follows. 

\begin{itemize}
\item First, we propose a projected recursive Newton method for solving this inverse problem. 
The idea is to use a certain number of Newton iterations at each frequency, starting from the lowest one, and then the reconstruction is used to linearize the problem 
at the next higher frequency. We prove the convergence rate of this algorithm, see section \ref{sec:RLA}, which shows a significant improvement compared to the linear 
convergence rate of the RLA obtained in \cite{B-T:JCM2010,S-T:IPI2012}. We investigate both noiseless and noisy data.

\item Second, a multi-level Newton method is proposed and its convergence rate is also investigated. The main idea of this method is to divide the whole frequency 
set into subsets and each of them are treated using the recursive Newton algorithm of section \ref{sec:RLA}. The difference between these two methods is that in multi-level 
Newton method the regularization parameter associated with different frequency subsets can be chosen to be different whereas in the original recursive Newton method this parameter 
is fixed at all frequencies. This adaptive choice of the regularization parameter allows us to obtain the same convergence rate as the previous algorithm but with less restrictive 
requirement on the accuracy of the reconstruction at the lowest frequency.  This topic is discussed in section \ref{sec:mNm}.  Related to this approach, we cite the work \cite{dHQS:2012} which also 
investigates a multi-level projected steepest descent method in Banach spaces with a discrepancy principle used for stopping the iterative process at each 
frequency. 
\end{itemize}

Finally, we show in section \ref{sec:num} some numerical results using simulated data to demonstrate the performance of the aforementioned algorithms. 
Our numerical results are consistent with the theoretical analysis of sections \ref{sec:RLA} and \ref{sec:mNm}.

Concerning the choice of the first guess at the starting frequency as well as the reconstruction accuracy, we refer the reader to section 3 of \cite{S-T:IPI2012}.

\section{A projected recursive Newton method and its convergence rate}\label{sec:RLA}

In this work, we consider the case of star-shaped obstacles whose boundary $\partial D$ can be represented by
\begin{equation}\label{eq:ps1}
  \partial D = \{ x(t)\in   \mathbb R^2: x(t) =  x^0 + r(t) (\cos t,\sin t) , t\in [0, 2\pi]\},
\end{equation}
where $x^0$ is a given internal point of $D$ in $  \mathbb R^2$ and
the radial function $r$ is positive in $[0,2\pi]$ with $r(0) = r(2\pi)$. In the following, we denote by $D(r)$ to indicate the dependence of
the obstacle on its radial function $r$. For each wavenumber $k$, we define the \textit{boundary-to-far field operator} (or \textit{far field operator}, in short) $F(\cdot,k)$ which maps each radial function $r$ to the far field pattern $u^\infty(\cdot,k,r)$ of the forward scattering problem \eqref{eq:1}--\eqref{eq:3} with $D = D(r)$. In this paper, we assume that the shape is of class $C^3$,
i.e., the $2\pi$-periodic extension of the radial function $r$ from $[0,2\pi]$ to $\mathbb R$ belongs to $C^3(\mathbb R)$. This smoothness guarantees the regularity of the derivatives of the far field operator used in section \ref{sec:conv}. We denote by $X_{ad}$ the set of radial functions of this $C^3$-class starlike shapes. This set is considered as the admissible set in our algorithm.

Let $X$ be a Hilbert space which contains the admissible set $X_{ad}$. In this work, we choose this space to be $X:= L_2[0,2\pi]$. However other spaces can be used as well. Therefore, for generality, in the following we use the notation $X$ instead of $L_2$. 
The derivative of $F$ with respect to the radial function, $\partial_r F(r,k)$, is defined by
\begin{equation*}
  \partial_r F(r,k)a :=
  \lim\limits_{\epsilon\to 0} \frac{F(r + \epsilon a,k) - F(r,k)}{\epsilon},
\end{equation*}
for $a\in X$. Note that $\partial_r F(r,k)$ is an injective linear operator from $X$ to $L_2(\mathbb S^1)$ for $r\in X_{ad}$, see \cite{Kirsch:IP1993,C-K:1998}. We also refer to these references for its characterization.

In the following sections, we denote by $u^{\infty,\delta}_m(\cdot,k) \in L_2(\mathbb S^1)$ the noisy measured far field pattern at the wavenumber $k$ with additive random noise of magnitude (noise level) $\delta \ge 0$. We define the operator $\tilde F_\delta$ from $X_{ad}$ to $L_2(\mathbb S^1)$ by $\tilde F_\delta(r,k) := F(r,k) - u^{\infty,\delta}_m(\cdot,k)$. The norms in $X$ and $L_2(\mathbb S^1)$ are denoted by $\|\cdot \|_X$ and $\|\cdot \|_2$, respectively.

\subsection{Description of the algorithm}

Suppose that the far field pattern is measured at the discrete set of frequencies $k_n := k_{l} + n\Delta k, n = 0, 1,\dots, N,$ with $\Delta k = \frac{k_{h} - k_{l}}{N}$. 
Consider a set of increasing subspaces $X_0 \subset X_1 \subset\cdots\subset X_N$ of $X$. The choice of these subspaces will be discussed later in section \ref{sec:conv}. We denote by $P_n$ the orthogonal projection of
$X$ onto $X_n$, $n = 0, 1, \dots, N$. Assume further that we have a rough approximation $r_0 \in X_0$ of the exact radial function at the lowest frequency $k_0 = k_l$, 
which can be obtained by minimizing the least-squares objective functional as described in the first step of \cite{S-T:IPI2012}. Given an integer $J > 0$ and an approximation $r_n\in X_n$ of
the radial function at wavenumber $k_n$, we denote by $r^0_{n+1} := r_n$ and consider $J$ Newton iterations at wavenumber $k_{n+1}$ as follows: 
$r^{j+1}_{n+1} := r^{j}_{n+1} + P_{n+1}\Delta r^j_{n+1}$, $j = 0, \dots, J-1,$ with $\Delta r^j_{n+1}$ being the solution of the regularized least-squares 
minimization problem
\begin{equation}\label{eq:rrla}
  \Delta r^j_{n+1} := \text{argmin}_{\Delta r} \left\{\frac{1}{2} \|\tilde F_\delta(r^j_{n+1},k_{n+1}) + \partial_r F(r^j_{n+1},k_{n+1})\Delta r\|_2^2 +\frac{1}{2} \alpha\|\Delta r\|_X^2 \right\}
\end{equation}
with $\alpha > 0$. The solution to \eqref{eq:rrla} is given  by
\begin{equation*}
  \Delta r^j_{n+1} =  - [\alpha I + (A^j_{n+1})^*A^j_{n+1}]^{-1}(A^j_{n+1})^* \tilde F_\delta(r^j_{n+1},k_{n+1}),
\end{equation*}
where $A^j_{n+1} := \partial_r F(r^j_{n+1},k_{n+1})$ and $(A^j_{n+1})^*$ is its adjoint operator. Hence,
\begin{equation}\label{eq:rla1-2}
  r^{j+1}_{n+1} := r^{j}_{n+1} - P_{n+1}[\alpha I + (A^j_{n+1})^*A^j_{n+1}]^{-1}(A^j_{n+1})^* \tilde F_\delta(r^j_{n+1},k_{n+1}), j = 0, \dots, J-1,
\end{equation}
Since $r^0_{n+1} = r_n\in X_n\subset X_{n+1}$, the approximations $r^j_{n+1}$ also belong to the subspace $X_{n+1}$ for $j = 1, \dots, J$. We choose $r_{n+1} := r^J_{n+1} \in X_{n+1}$ 
as the reconstruction at the wavenumber $k_{n+1}$. This process 
is repeated until the highest wavenumber $k_{N} = k_{h}$. The algorithm is summarized as follows.

\begin{algorithm}\label{alg:1}
$ $
\begin{itemize}
\item Given measured data  $u^{\infty,\delta}_m(\cdot,k)$ for $k =
k_0, \dots, k_N$, the parameter $\alpha > 0$ and the subspaces $X_n$, $n = 0,\dots, N$. 
\item Step 1: find an approximation $r_0 \in X_0$ at frequency $k_0$.
\item Step 2 (recurrence) For $n = 0,\dots, N-1$
    \begin{itemize}
    \item Set $r^0_{n+1} := r_n.$
    \item For $j = 0, \dots, J-1$
	\begin{itemize}
	\item Compute $\tilde F_\delta(r^j_{n+1},k_{n+1})$, $A^j_{n+1}$ and $(A^j_{n+1})^*$.
	\item Compute $r^{j+1}_{n+1} := r^{j}_{n+1} - P_{n+1}[\alpha I + (A^j_{n+1})^*A^j_{n+1}]^{-1}(A^j_{n+1})^* \tilde F_\delta(r^j_{n+1},k_{n+1})$.		
	\end{itemize}
    End (for $j$)
    \item Set $r_{n+1} := r^J_{n+1}$.   
    \end{itemize}
End (for $n$).
\end{itemize}
\end{algorithm}

\begin{remark}~~~~~~~~~~~~~~~~~~~~~~~~~~~~~~~~~~~~~~~~~~~~~~~~~~~~~~~~~~~~~~~~~~~~~~~~~~~~~~~~~~~~~~~~~~~~~~~~~~~~~~~~~~~~~~~~~~~~

1. In the recursive linearization algorithm, as discussed in \cite{B-T:JCM2010, Chen:IP1997, S-T:IPI2012}, only one Newton step is used at each frequency. 
 In other words, the reconstruction at $k_{n+1}$ is chosen by $r_{n+1} := r^1_{n+1}$. 

2. The stopping criteria in Algorithm  \ref{alg:1} is related to a trade-off between the frequency step $\Delta k$ (or the number of $N$
of the used frequencies) and the number  $J$ of the Newton iterations to achieve a final error of the order $O(\delta^{\frac{2}{3}})$, see Remark \ref{trade-off}.
 \end{remark}

To implement Algorithm \ref{alg:1}, it is necessary to represent the radial function $r$ as a function of a finite number of parameters. Since any radial function $r(t)$ satisfies $r(0) = r(2\pi)$, it can be considered as a periodic function with the period of $2\pi$. Hence, we can represent it as the following Fourier
series
\begin{equation}\label{eq:con1}
  r(t) = \beta_0 + \sum\limits_{m = 1}^\infty (\beta_m\cos mt + \gamma_m\sin mt).
\end{equation}
We note that the Fourier coefficients $\beta_m$ and $\gamma_m$ converge to zero as $m\to \infty$. Their convergence rate depends on the smoothness of the function $r(t)$, see \cite{Folland:1992}. For each number $M\in   \mathbb N$, we define the cut-off approximation $r^M(t)$ of $r(t)$ by
\begin{equation}\label{eq:con2}
  r^M(t) := \beta_0 + \sum\limits_{m = 1}^M (\beta_m\cos mt + \gamma_m\sin mt).
\end{equation}
It is clear that, for large $M$, $r^M$ is different from $r$ just in high frequency modes which represent small details of the obstacle shape. Let us recall the notion of \textit{finite dimensional observable shapes} which was defined in \cite{S-T:IPI2012}. For a given value $\tilde \delta > \delta$, there exists a number $M_0(k)\in  \mathbb  N$ depending on $k$ such that $\|F(r^M,k) - F(r,k)\|_2 \le \tilde \delta -\delta$ for all $M \ge M_0(k)$. Consequently,  $\|F(r^M,k) - u^{\infty,\delta}_m(\cdot,k)\|_2 \le \tilde \delta$ for $M \ge M_0(k)$. Note that $M_0(k)$ also depends on $\tilde \delta$ and $\delta$, but we ignore these parameters since they are fixed throughout the paper.  From this analysis, we can simplify the inverse problem by determining the cut-off approximation $r^M$ (or its Fourier coefficients) instead of the radial function $r$ itself. By this simplification, the inverse problem becomes finite dimensional. 

In the following analysis, we choose the subspaces $X_n$ of $X$ containing all functions 
of the form \eqref{eq:con2} with $M := M_n$ depending on $n$, that means $X_n$ is spanned by the basis $\{1, \cos t, \sin t, \dots, \cos M_nt, \sin M_nt \}$. We denote by $X_n^+ := \{\varphi\in X_n: \varphi(t) > 0, \forall t\in [0,2\pi]\}$. Since functions in $X_n$ are smooth, we have $X_n^+ \subset X_{ad}$.

\begin{definition}\label{def:fdos}
  For each wavenumber $k$ and a given $\tilde \delta > \delta$, a \textit{finite dimensional observable shape} (or, in short, observable shape) $D(\tilde r(k))$  is defined as a domain of which the radial function $\tilde r(k)\in X_M^+$ for some $M\in   \mathbb N$  and the corresponding far field pattern $F(\tilde r(k),k)$ satisfies the condition $\|F(\tilde r(k),k) - u^{\infty,\delta}_m(\cdot,k)\|_2 \le \tilde \delta $.
\end{definition}

 By this definition, a finite dimensional observable shape basically produces the same measured data as the true one (up to the noise level) but usually has a simpler Fourier series. It is obvious that $D(r^M)$ is a finite dimensional observable shape of the obstacle $D(r)$ for $M\ge M_0(k)$. However, we should emphasize that, there may be several finite dimensional observable shapes which are very different from $D(r^M)$ due to the ill-posedness of the considered inverse problem. The question on how these observable shapes approximate the original one relates closely to the stability of the inverse problem which was discussed in \cite{S-T:IPI2012}.

As remarked in \cite{S-T:IPI2012}, we made use of the value $\tilde \delta$ instead of the noise level $\delta$ because if the latter is used, the finite dimensional observable shapes might not exist. However, it is possible to choose $\tilde \delta$  close to $\delta$ while $M_0(k)$ can still be chosen not too large. This can be explained using the Heisenberg's uncertainty principle in Physics on the resolution limit of
scattering problems. It says that, at a fix frequency, we cannot observe small details of the scatterer using noisy measurements of the far field pattern, regardless the noise magnitude. In other words, choosing too many Fourier modes does not help to improve the reconstruction accuracy but increases the instability of the reconstruction. Therefore, $M_0(k)$ should not be chosen too large. As shown in \cite{Chen:IP1997}, this resolution limit is about half of the wavelength for weak penetrable scatterers, see also \cite{A-G-K-L-S:2010,B-T:JCM2010}. Due to this  uncertainty principle, we also choose $r_n$, $n = 0,\dots, N$, in Algorithm \ref{alg:1} such that they contain finite numbers of Fourier modes.

\subsection{Convergence rate}\label{sec:conv}

Algorithm \ref{alg:1} requires an approximation $r_0$ of the true radial function at the lowest frequency. As proved in \cite{S-T:IPI2012}, we can only guarantee a ``good'' $r_0$ 
if the true obstacle $D$ is contained in the  disk $B(x^0,\frac{\pi}{k_{l}})$ centered at the point $x^0$ and  radius $\frac{\pi}{k_{l}}$. Therefore, we first assume that the 
unknown obstacle is within a given region and the lowest frequency $k_l$ is chosen so small that this region is contained inside the disk $B(x^0,\frac{\pi}{k_{l}})$. 
Moreover, in the sequel, we make the following assumptions about the radial functions of the observable shapes:

\textbf{Assumption 1}: The radial functions $\tilde r(k_n), n = 0,\dots, N,$ are bounded from below, i.e., there exists a constant $\tilde c> 0$ such that 
\begin{equation}\label{eq:ass1}
 \tilde c \le \|\tilde r(k_n)\|, \text{ for all } n,
\end{equation}
 where $\|\cdot\|$ represents the maximum norm. Since the observable shapes, roughly speaking, are approximations of the true one, 
 the assumption (\ref{eq:ass1}) requires that the size of the true obstacle is not too small. As indicated in Theorem \ref{the:con1}, this lower bound $\tilde c$ can be chosen comparable to the regularization parameter $\alpha$, see \eqref{eq:cond0}, which is reasonably small. That means, this assumption is not very restrictive. 

\textbf{Assumption 2}: There exists a constant $d_0 \ge 1$ such that 
\begin{equation}\label{eq:con5}
    \|\tilde r(k_{n+1}) -  \tilde r(k_{n})\|_X \le d_0 |k_{n+1} - k_n|, \forall n = 0,\dots, N-1.
\end{equation}
Roughly speaking, this assumption says that the observable shapes of two consecutive frequencies should not be too different. For more details about the validity of Assumption 2, see Remark 3 of \cite{S-T:IPI2012}.

For the following convergence analysis, we assume that the subspaces $X_{n}$, $n = 0,\dots, N$ are chosen such that they contain the radial functions $\tilde r(k_n)$ of the observable shapes. 
We denote by $\tilde A_n := \partial_r F(\tilde r(k_n),k_{n})$, $n = 0, 1, \dots, N$ and $\sigma_n$ the smallest singular value of $\tilde A_n$ restricted to $X_{n+1}$, $\tilde A_n\big\vert_{X_{n+1}}$, for $n = 0, \dots, N-1$. Since these operators are injective, we have $\sigma_n > 0$, $n = 0,\dots, N-1$. Finally we define
\begin{equation}\label{eq:sigma}
\sigma := \min \{\sigma_0,\dots, \sigma_{N-1} \}. 
\end{equation}

For the radial functions $\tilde r(k), k\in [k_l,k_h]$ associated with a given set of  observable shapes of $r$, we write the operator $\tilde F_\delta$ as
\begin{equation}\label{eq:F1}
\tilde F_\delta(r,k) = \tilde F(r,k) + f^{\tilde\delta}(\tilde r(k),k)
\end{equation}
 with $\tilde F(r,k) := F(r,k)  - F(\tilde r(k),k)$ and $f^{\tilde\delta}(\tilde r(k),k) := F(\tilde r(k),k) - u^{\infty,\delta}_m(\cdot,k)$. Note that $\|f^{\tilde\delta}(\tilde r(k),k)\|_2 \le \tilde\delta$. It is obvious that
\begin{equation}\label{eq:con3}
  \tilde F(\tilde r(k),k) = 0, \forall k\in [k_{l},k_{h}].
\end{equation}
Note that  $F(r,k)$ is twice continuously
differentiable (see Remark 1 of \cite{S-T:IPI2012}). Therefore, there exist some positive constants $d_i, i = 1,\dots, 4$, such that for all $r\in B(\frac{\pi}{k_l})$ and $k\in [k_l,k_h]$, we have
\begin{equation}\label{eq:con4}
  \begin{split}
    &\|\partial_r \tilde F(r,k)\|_{\mathcal L(X,Y)} \le d_1, \ \|\partial_k \tilde F(r,k)\|_2 \le d_2,\\
    &\|\partial^2_{rr} \tilde F(r,k)\|_{\mathcal L(X\times X,Y)} \le d_3,\ \|\partial^2_{kr} \tilde F(r,k)\|_{\mathcal L(X,Y)} \le d_4.
  \end{split}
\end{equation}

In this section, we need the following estimates concerning compact linear operators.
\begin{lemma}\label{le:spectral}
  Let $A$ be a compact linear operator from a Hilbert space $X$ to a Hilbert space $Y$ and $R_\alpha(A):= (\alpha I + A^*A)^{-1}A^*$ with $\alpha > 0$. Then
  \begin{eqnarray}
    &&\|(\alpha I + A^*A)^{-1}\|_{\mathcal L(X,X)} \le \frac{1}{\alpha},\label{eq:A1}\\
    &&\|R_\alpha(A)\|_{\mathcal L(Y,X)} \le \frac{1}{2\sqrt \alpha},\label{eq:A2}\\
    &&\|R_\alpha(A)A\|_{\mathcal L(X,X)} \le 1. \label{eq:A3}
  \end{eqnarray}
Moreover, if $\tilde A$ is also a compact linear operator from $X$ to $Y$, we have
\begin{equation}\label{eq:R}
  \|R_\alpha(A) - R_\alpha(\tilde A)\|_{\mathcal L(Y,X)} \le \frac{9}{4\alpha}\|A - \tilde A\|_{\mathcal L(X,Y)}.
\end{equation}
\end{lemma}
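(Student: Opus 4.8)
The plan is to diagonalize $A$ by its singular system so that \eqref{eq:A1}--\eqref{eq:A3} reduce to elementary scalar inequalities, and then to treat \eqref{eq:R} by a resolvent-identity decomposition that peels off the perturbation $A-\tilde A$ between regularized factors whose norms are already under control.

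First, for the three inequalities \eqref{eq:A1}--\eqref{eq:A3}, I would invoke the singular value decomposition of the compact operator $A$, writing its singular system as $(\sigma_j;v_j,u_j)$ with $Av_j=\sigma_j u_j$ and $A^*u_j=\sigma_j v_j$. Each of $(\alpha I+A^*A)^{-1}$, $R_\alpha(A)$ and $R_\alpha(A)A$ is diagonal in this system, acting on the $j$-th mode by the scalar filter factors $\frac{1}{\alpha+\sigma_j^2}$, $\frac{\sigma_j}{\alpha+\sigma_j^2}$ and $\frac{\sigma_j^2}{\alpha+\sigma_j^2}$, respectively. The operator norms are the suprema of these factors, so \eqref{eq:A1} and \eqref{eq:A3} follow from $\frac{1}{\alpha+\sigma^2}\le\frac1\alpha$ and $\frac{\sigma^2}{\alpha+\sigma^2}\le 1$, while \eqref{eq:A2} follows from the elementary bound $\alpha+\sigma^2\ge 2\sqrt\alpha\,\sigma$, which gives $\frac{\sigma}{\alpha+\sigma^2}\le\frac{1}{2\sqrt\alpha}$. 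I will also record for later use the companion estimate $\|AR_\alpha(A)\|_{\mathcal L(Y,Y)}\le 1$, proved identically since $AR_\alpha(A)$ acts on the $u_j$ by the same factor $\frac{\sigma_j^2}{\alpha+\sigma_j^2}$.

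For \eqref{eq:R}, set $S_\alpha:=(\alpha I+A^*A)^{-1}$, $\tilde S_\alpha:=(\alpha I+\tilde A^*\tilde A)^{-1}$ and $C:=A-\tilde A$, so that $R_\alpha(A)=S_\alpha A^*$. I would split $R_\alpha(A)-R_\alpha(\tilde A)=(S_\alpha-\tilde S_\alpha)A^*+\tilde S_\alpha(A^*-\tilde A^*)$. The second summand is immediately bounded by $\frac1\alpha\|C\|$ via \eqref{eq:A1}. For the first, the resolvent identity $S_\alpha-\tilde S_\alpha=\tilde S_\alpha(\tilde A^*\tilde A-A^*A)S_\alpha$ together with the factorization $\tilde A^*\tilde A-A^*A=\tilde A^*(\tilde A-A)+(\tilde A^*-A^*)A$ turns $(S_\alpha-\tilde S_\alpha)A^*$ into $R_\alpha(\tilde A)(\tilde A-A)R_\alpha(A)+\tilde S_\alpha(\tilde A^*-A^*)\,AR_\alpha(A)$, where I have used $\tilde S_\alpha\tilde A^*=R_\alpha(\tilde A)$ and $S_\alpha A^*=R_\alpha(A)$. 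Bounding the first of these by \eqref{eq:A2} applied to both $A$ and $\tilde A$ gives $\frac{1}{4\alpha}\|C\|$, and bounding the second by \eqref{eq:A1} together with $\|AR_\alpha(A)\|\le 1$ gives $\frac1\alpha\|C\|$. Summing the three contributions yields $\big(\frac1\alpha+\frac{1}{4\alpha}+\frac1\alpha\big)\|C\|=\frac{9}{4\alpha}\|C\|$, which is \eqref{eq:R}.

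The only delicate point is the choice of groupings in the last step: the factorization of $\tilde A^*\tilde A-A^*A$ must be arranged so that every factor left standing next to the perturbation $C$ is either a bare resolvent $S_\alpha,\tilde S_\alpha$ or one of the regularized combinations $R_\alpha(A),R_\alpha(\tilde A),AR_\alpha(A)$ whose norms are uniform in the operator. Any grouping that leaves an unregularized $A$ or $\tilde A$ exposed would force the appearance of $\|A\|$ or $\|\tilde A\|$ and destroy the clean $\frac1\alpha$-type constant; avoiding this, and thereby matching the precise constant $\frac94$, is where the care is needed.
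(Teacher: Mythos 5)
The paper states Lemma~\ref{le:spectral} without proof---it is invoked as a known spectral-theoretic fact and used immediately in the proof of Theorem~\ref{the:con1}---so there is no argument of the authors' to compare yours against; your proposal has to stand on its own, and it does: it is correct and recovers the exact constant $\frac{9}{4\alpha}$.

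For \eqref{eq:A1}--\eqref{eq:A3} your filter-factor argument is the standard one and is sound. The single point to tighten is that the singular system of a compact $A$ spans only $(\ker A)^\perp$ in $X$ and $(\ker A^*)^\perp$ in $Y$, so ``the norm is the supremum of the factors'' needs the complementary modes accounted for: on $\ker A$ the operator $(\alpha I+A^*A)^{-1}$ acts as $\alpha^{-1}I$ while $R_\alpha(A)A$ vanishes, and on $\ker A^*$ the operator $R_\alpha(A)$ vanishes; these are precisely the $\sigma=0$ values of your three filter functions, so all the bounds, as well as your companion estimate $\|AR_\alpha(A)\|_{\mathcal L(Y,Y)}\le 1$, persist unchanged.

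For \eqref{eq:R}, writing $S_\alpha:=(\alpha I+A^*A)^{-1}$ and $\tilde S_\alpha:=(\alpha I+\tilde A^*\tilde A)^{-1}$ as you do, the decomposition
\begin{align*}
R_\alpha(A)-R_\alpha(\tilde A) &= (S_\alpha-\tilde S_\alpha)A^*+\tilde S_\alpha(A^*-\tilde A^*),\\
(S_\alpha-\tilde S_\alpha)A^* &= R_\alpha(\tilde A)(\tilde A-A)R_\alpha(A)+\tilde S_\alpha(\tilde A^*-A^*)\,AR_\alpha(A),
\end{align*}
is algebraically exact: the resolvent identity $S_\alpha-\tilde S_\alpha=\tilde S_\alpha(\tilde A^*\tilde A-A^*A)S_\alpha$ and the splitting $\tilde A^*\tilde A-A^*A=\tilde A^*(\tilde A-A)+(\tilde A^*-A^*)A$ both check out, and every factor left adjacent to the perturbation $A-\tilde A$ is one of the uniformly bounded operators from the first part. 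The three contributions are bounded by $\frac{1}{\alpha}$, $\frac{1}{4\alpha}$ and $\frac{1}{\alpha}$ times $\|A-\tilde A\|_{\mathcal L(X,Y)}$, summing to $\frac{9}{4\alpha}\|A-\tilde A\|_{\mathcal L(X,Y)}$ as claimed. Your closing remark about why the grouping matters (avoiding any exposed, unregularized $A$ or $\tilde A$) is exactly the right observation.
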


We first prove the following result for the case of noiseless data.

\begin{theorem}\label{the:con1}
 Assume that the radial functions $\tilde r(k_n), n = 0,\dots, N,$ of the observable shapes satisfy Assumptions 1 and 2. Let $X_n, n = 0,\dots, N,$ be the subspaces of $X$ containing these radial functions. Let $r_n, n = 0,\dots, N,$ be given by  Algorithm \ref{alg:1} with $\tilde F_\delta$ being replaced by $\tilde F$. Then for a fixed positive real number $\epsilon$, $0 < \epsilon < 3/(2+d_0)$, and for the regularization parameter $\alpha$ satisfying
\begin{equation}\label{eq:alpha}
   \alpha \le \dfrac{\epsilon \sigma^2}{3 - \epsilon},
\end{equation}
there exists an integer $N_0$ depending on $\epsilon$ and $\alpha$ such that  if
\begin{equation}\label{eq:cond0}
    \|\tilde r(k_{l}) - r_0\|_X \le d_0c_0\alpha < \tilde c,
\end{equation}
with 
\begin{equation}\label{eq:c0}
 c_0 :=  \frac{4\epsilon}{3d_3(9d_1+ \sqrt{\alpha})},
\end{equation}
then we have $r_{n}(t) > 0\  \forall  t\in [0,2\pi]$ and the following error estimate holds true
  \begin{equation}\label{eq:con380}
    \|\tilde r(k_{h}) - r_N\|_X \le C_1 \left(\frac{\epsilon(1+d_0)}{3}\right)^{J-1} \sqrt{\Delta k}, \forall N \ge N_0,
  \end{equation}
  where $C_1$ is a constant independent of $\alpha$ and $N$.
\end{theorem}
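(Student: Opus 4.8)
The plan is to track the error $e^j_{n+1} := \tilde r(k_{n+1}) - r^j_{n+1}$ of the $j$-th Newton iterate at frequency $k_{n+1}$, together with the end-of-frequency error $e_n := \tilde r(k_n) - r_n = e^J_n$, and to establish two estimates that are then chained by an induction over $n$: a \emph{within-frequency} contraction and a \emph{frequency-jump} bound. The whole argument rests on the vanishing relation \eqref{eq:con3}, which lets me expand $\tilde F$ about the observable shape at each fixed frequency and thereby turn a single Newton step into a genuine contraction.

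For the within-frequency step, since both $r^j_{n+1}$ and $\tilde r(k_{n+1})$ lie in $X_{n+1}$ we have $P_{n+1}e^j_{n+1}=e^j_{n+1}$, and subtracting $\tilde r(k_{n+1})$ from \eqref{eq:rla1-2} while Taylor-expanding $\tilde F(r^j_{n+1},k_{n+1})$ about $\tilde r(k_{n+1})$ (using \eqref{eq:con3} and $\tilde A_{n+1}=\partial_r F(\tilde r(k_{n+1}),k_{n+1})$) gives $e^{j+1}_{n+1} = P_{n+1}\bigl[I - R_\alpha(A^j_{n+1})\tilde A_{n+1}\bigr]e^j_{n+1} + \tfrac12 P_{n+1}R_\alpha(A^j_{n+1})\,\partial^2_{rr}\tilde F(\xi)(e^j_{n+1},e^j_{n+1})$. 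I would split the linear operator as $I - R_\alpha(A^j_{n+1})\tilde A_{n+1} = \alpha(\alpha I + (A^j_{n+1})^*A^j_{n+1})^{-1} - R_\alpha(A^j_{n+1})(\tilde A_{n+1}-A^j_{n+1})$. The resolvent piece, acting on $X_{n+1}$, is controlled by the smallest singular value $\sigma$ from \eqref{eq:sigma}, yielding the factor $\frac{\alpha}{\alpha+\sigma^2}\le \frac{\epsilon}{3}$ exactly under \eqref{eq:alpha}; the perturbation piece and the quadratic Taylor remainder are bounded through Lemma \ref{le:spectral} (in particular \eqref{eq:A2} and \eqref{eq:R}) and the derivative bounds \eqref{eq:con4} by $b\,\|e^j_{n+1}\|_X^2$ with $b=\frac{d_3(9d_1+\sqrt\alpha)}{4\alpha}$. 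The point of the definition \eqref{eq:c0} is that $b\,(d_0c_0\alpha)=\frac{d_0\epsilon}{3}$, so as long as $\|e^j_{n+1}\|_X\le d_0c_0\alpha$ the net one-step factor is $\frac{\epsilon}{3}+\frac{d_0\epsilon}{3}=\frac{\epsilon(1+d_0)}{3}=:\rho<1$, the constraint $\epsilon<3/(2+d_0)$ leaving the extra slack needed for the jump terms.

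For the frequency jump, Assumption 2 in \eqref{eq:con5} gives immediately $\|e^0_{n+1}\|_X=\|\tilde r(k_{n+1})-r_n\|_X\le \|e_n\|_X + d_0\Delta k$. Combined with the contraction this produces the master recursion $\|e_{n+1}\|_X \le \rho^{J}\bigl(\|e_n\|_X + d_0\Delta k\bigr)$, the exponent $J-1$ in \eqref{eq:con380} reflecting one step spent re-entering the contraction region after each jump. The core is then an induction on $n$ that simultaneously (i) maintains the invariant $\|e_n\|_X\le d_0c_0\alpha<\tilde c$ needed to keep every iterate in the contraction region, and (ii) deduces positivity $r_n(t)>0$ from this invariant via Assumption 1 \eqref{eq:ass1} (passing from the $X=L_2$ bound to a sup-norm bound using the finite dimensionality of $X_n$). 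The invariant is preserved precisely once $\Delta k$ is small enough, i.e.~$N\ge N_0$, with $N_0$ depending on $\epsilon$ and $\alpha$ through $\rho$ and $d_0c_0\alpha$; the base case $n=0$ is the hypothesis \eqref{eq:cond0}. Unrolling the geometric recursion gives $\|e_N\|_X\le \rho^{JN}\|e_0\|_X + \frac{\rho^{J}d_0}{1-\rho^{J}}\Delta k$, and since $\Delta k\le \sqrt{k_h-k_l}\,\sqrt{\Delta k}$ this is dominated by $C_1\rho^{J-1}\sqrt{\Delta k}$ with $C_1$ independent of $\alpha$ and $N$, which is \eqref{eq:con380}.

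The step I expect to be the main obstacle is the one-step linear contraction, namely showing that $P_{n+1}\alpha(\alpha I + \tilde A_{n+1}^*\tilde A_{n+1})^{-1}$ acts on $X_{n+1}$ with norm at most $\frac{\alpha}{\alpha+\sigma^2}$. The delicate issue is that the resolvent is formed on the full space $X$ and the projection $P_{n+1}$ is applied afterward, so one cannot merely read off eigenvalues of the full-space resolvent (the singular values of the compact $\tilde A_{n+1}$ accumulate at $0$); the bound must genuinely exploit the restriction $\tilde A_{n+1}\big\vert_{X_{n+1}}$ and the choice of the subspaces $X_n$ as spans of the stably observable Fourier modes, for which this restriction is bounded below by $\sigma$. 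Transferring from the iterate derivative $A^j_{n+1}$ to the target derivative $\tilde A_{n+1}$ via \eqref{eq:R}, and carefully separating the bounds that are linear in $\|e^j_{n+1}\|_X$ from those that are quadratic (so the latter can be absorbed under the invariant), is the technical heart of the argument.
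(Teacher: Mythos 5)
Your overall skeleton (a within-frequency error recursion, an invariant $\|e^j_n\|_X\le d_0c_0\alpha$ maintained by induction over $n$ and $j$, geometric unrolling, and the final conversion $\Delta k\le\sqrt{k_h-k_l}\,\sqrt{\Delta k}$) matches the paper, but your handling of the frequency jump is genuinely different. The paper analyzes the \emph{first} Newton step at $k_{n+1}$ by linearizing about $\tilde r(k_n)$, which forces it to compare $\tilde F(r_n,k_{n+1})$ with $\tilde F(r_n,k_n)$ and $A^0_{n+1}$ with $\tilde A_n$; this is precisely where the frequency-derivative bounds $d_2,d_4$ of \eqref{eq:con4}, the conditions \eqref{eq:con20} and \eqref{eq:con212}, and the term $\frac{d_2}{2\sqrt{\alpha}}\Delta k$ (whence $\sqrt{\Delta k}$, via $\Delta k/\sqrt{\alpha}=\sqrt{\Delta k}\sqrt{\Delta k/\alpha}$ and \eqref{eq:con20}) enter. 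You instead expand every step, including $j=0$, about $\tilde r(k_{n+1})$ and push the entire jump into $\|e^0_{n+1}\|_X\le\|e_n\|_X+d_0\Delta k$ via \eqref{eq:con5}. This is legitimate (both $r_n$ and $\tilde r(k_{n+1})$ lie in $X_{n+1}$, and \eqref{eq:con3} kills the zeroth-order Taylor term), it avoids $d_2,d_4$ altogether, and it would even yield an $O(\Delta k)$ inhomogeneous term, which you then relax to $O(\sqrt{\Delta k})$ --- a cleaner route, provided you track the cross terms of size $b\,d_0\Delta k\,\|e^0_{n+1}\|_X$ that arise because your quadratic bound is applied to $\|e_n\|_X+d_0\Delta k$ rather than to $d_0c_0\alpha$; these are exactly what $N_0$ must absorb, and your master recursion $\|e_{n+1}\|_X\le\rho^J(\|e_n\|_X+d_0\Delta k)$ is only correct modulo that correction.

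The genuine gap is in your contraction step. You split $I-R_\alpha(A^j_{n+1})\tilde A_{n+1}=\alpha\bigl(\alpha I+(A^j_{n+1})^*A^j_{n+1}\bigr)^{-1}-R_\alpha(A^j_{n+1})\bigl(\tilde A_{n+1}-A^j_{n+1}\bigr)$ and claim the resolvent piece has norm at most $\frac{\alpha}{\alpha+\sigma^2}$ on $X_{n+1}$ ``exactly under \eqref{eq:alpha}''. But $\sigma$ in \eqref{eq:sigma} is a lower bound for the singular values of the derivatives $\tilde A_n$ \emph{at the observable shapes}, restricted to $X_{n+1}$; it says nothing about $A^j_{n+1}=\partial_rF(r^j_{n+1},k_{n+1})$, the derivative at the \emph{iterate}, which is the operator sitting inside your resolvent. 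To salvage your splitting you would need a Weyl-type perturbation bound $\sigma_{\min}\bigl(A^j_{n+1}\vert_{X_{n+1}}\bigr)\ge\sigma-d_3\|e^j_{n+1}\|_X$, after which the factor becomes $\alpha/\bigl(\alpha+(\sigma-d_3d_0c_0\alpha)^2\bigr)$, so the clean identity $b\,d_0c_0\alpha=\frac{d_0\epsilon}{3}$ and the threshold $\epsilon/3$ no longer close the induction as stated and the constants \eqref{eq:alpha}, \eqref{eq:c0} would have to be re-derived. The paper avoids this issue by construction: it keeps the resolvent at the observable shape ($\tilde R_n$, resp.\ $\tilde R_{n+1}$), where $\sigma$ is actually defined, and transfers to the computed operator by estimating $(\tilde R-R^j)\tilde F(\cdot)$ with \eqref{eq:R}, using $\|\tilde F(r,k)\|_2\le d_1\|e\|_X$ from \eqref{eq:con3}--\eqref{eq:con4}; that is exactly the origin of the coefficient $\frac{9d_1d_3}{4\alpha}$ hidden in \eqref{eq:c0}. (The restriction-versus-projection subtlety you flag at the end --- that $P_{n+1}(\alpha I+\tilde A^*\tilde A)^{-1}$ is not a spectral function of the restricted operator --- is real, but it is a gap you share with the paper, which disposes of it with the words ``spectral theory implies''; it is not what breaks your specific argument. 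What breaks it is evaluating the resolvent at the wrong operator.)
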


\begin{proof}
For $n = 0,\dots, N$ and $j = 0,\dots, J$, we denote by $e_n := \tilde r(k_n) - r_n$, $R^j_n := [\alpha I + (A^j_n)^*A^j_n]^{-1}(A^j_n)^*$ and $\tilde R_n := (\alpha I + \tilde A_n^*\tilde A_n)^{-1}\tilde A_n^*$. We also denote by  $e^j_{n+1} := \tilde r(k_{n+1}) - r^j_{n+1}$ for $j = 1, \dots, J; n = 1,\dots, N$.

 We first estimate $e^1_{n+1}$. Here we repeat some arguments of \cite{B-T:JCM2010,S-T:IPI2012}. It follows from \eqref{eq:rla1-2} that
\begin{equation}\label{eq:con8}
\begin{split}
  e^1_{n+1} =& \tilde r(k_{n+1}) - r^0_{n+1} + P_{n+1}R^0_{n+1} \tilde F(r^0_{n+1},k_{n+1})\\
      =& \tilde r(k_{n+1}) - \tilde r(k_{n}) +  \tilde r(k_{n}) - r^0_{n+1} - P_{n+1} \tilde R_n \tilde A_n e_n \\
      + &  P_{n+1} \left[\tilde R_n \tilde A_n e_n+ R^0_{n+1} \tilde F(r^0_{n+1},k_{n+1}) \right].
\end{split}
\end{equation}
We recall that $r^0_{n+1} = r_n$. Let us evaluate the right hand side. Firstly, the first two terms are bounded by \eqref{eq:con5}. Secondly, note that since $\tilde r(k_n), r_n\in X_n \subset X_{n+1}$, we have $ \tilde r(k_{n}) - r^0_{n+1} = P_{n+1} e_n$. The spectral theory implies that
\begin{equation}\label{eq:con16}
   \| P_{n+1}\left[ e_n - \tilde R_n \tilde A_n e_n \right]\|_X\le \frac{\alpha}{\alpha+ \sigma^2}\|e_n\|_X.
\end{equation}
Thirdly,  
\begin{equation}\label{eq:con10}
\begin{split}
  \tilde R_n \tilde A_n e_n+ R^0_{n+1} \tilde F(r^0_{n+1},k_{n+1})& = \tilde R_n[\tilde A_n e_n + \tilde F(r_n,k_n)] - (\tilde R_n - R^0_{n+1})\tilde F(r_n,k_n)\\
  &  + R^0_{n+1}[\tilde F(r_n,k_{n+1})- \tilde F(r_n,k_{n})].
\end{split}
\end{equation}
Using the Taylor expansion of $\tilde F(r_n,k_n)$ at $\tilde r(k_n)$ up to the second order, \eqref{eq:A2} and \eqref{eq:con3}--\eqref{eq:con4}, we have
\begin{equation}\label{eq:con11}
 \| \tilde R_n[\tilde A_n e_n + \tilde F(r_n,k_n)]\|_X \le \frac{d_3}{4\sqrt\alpha}\|e_n\|_X^2.
\end{equation}
On the other hand, it follows from Lemma~\ref{le:spectral} and \eqref{eq:con3}--\eqref{eq:con4} that
\begin{equation*}\label{eq:con12}
\begin{split}
 \| (\tilde R_n - R^0_{n+1})\tilde F(r_n,k_n)\|_X & \le \frac{9}{4\alpha}\|A^0_{n+1} - \tilde A_n\|_{\mathcal L(X,Y)} \|\tilde F(r_n,k_n) - \tilde F(\tilde r(k_n),k_n)\|_2\\
  & \le \frac{9d_1}{4\alpha}\|A^0_{n+1} - \tilde A_n\|_{\mathcal L(X,Y)} \|e_n\|_X.
\end{split}
\end{equation*}
From the definition of $A^0_{n+1}$ and $\tilde A_n$ we have
\begin{equation*}
\begin{split}
 \| A^0_{n+1} - \tilde A_n\|_{\mathcal L(X,Y)} \le & \|\partial_r\tilde F(r_n,k_{n+1}) - \partial_r\tilde F(r_n,k_{n})\|_{{\mathcal L(X,Y)}}\\
&  + \|\partial_r\tilde F(r_n,k_{n}) - \partial_r\tilde F(\tilde r(k_n),k_{n})\|_{{\mathcal L(X,Y)}}\\
  \le & \Delta k d_4 + d_3\|e_n\|_X.
\end{split}
\end{equation*}
Replacing this estimate into the above inequality we obtain
\begin{equation}\label{eq:con13}
  \|(\tilde R_n - R^0_{n+1})\tilde F(r_n,k_n)\|_X \le \frac{9d_1}{4\alpha}\left[\Delta k d_4 + d_3\|e_n\|_X \right] \|e_n\|_X.
\end{equation}
It follows from \eqref{eq:con4} that
\begin{equation}\label{eq:con14}
  \|R^0_{n+1}[\tilde F(r_n,k_{n+1})- \tilde F(r_n,k_{n})]\|_X \le \frac{d_2}{2\sqrt\alpha}\Delta k.
\end{equation}
Substituting \eqref{eq:con11}--\eqref{eq:con14} into \eqref{eq:con10}, we obtain
\begin{equation}\label{eq:con102}
\begin{split}
  \|P_{n+1}\left[\tilde R_n \tilde A_n e_n+ R^0_{n+1} \tilde F(r_n,k_{n+1}) \right] \|_X \le  \frac{d_2}{2\sqrt\alpha}\Delta k  + \left(\frac{9d_1d_3}{4\alpha} + \frac{d_3}{4\sqrt\alpha} \right)\|e_n\|_X^2.
\end{split}
\end{equation}
By combining \eqref{eq:con5}, \eqref{eq:con16} and \eqref{eq:con102} we have
\begin{equation}\label{eq:con32}
\begin{split}
  \|e^1_{n+1}\|_X & \le \Delta k\left( d_0 + \frac{d_2}{2\sqrt\alpha}\right) +  \frac{\alpha}{\alpha+ \sigma^2}\|e_n\|_X \\
  & + \frac{9d_1d_4}{4\alpha}\Delta k \|e_n\|_X + \left(\frac{9d_1d_3}{4\alpha} + \frac{d_3}{4\sqrt\alpha} \right)\|e_n\|_X^2.
\end{split}
\end{equation}
Let us estimate the right hand side of \eqref{eq:con32}. First, it follows from \eqref{eq:alpha} that
\begin{equation}\label{eq:con18}
  \frac{\alpha}{\alpha+ \sigma^2}\le \frac{\epsilon}{3},
\end{equation}
Next, if $\|e_n\|_X \le d_0c_0 \alpha$, from \eqref{eq:c0} we have
\begin{equation}\label{eq:con19}
  \left(\frac{9d_1d_3}{4\alpha} + \frac{d_3}{4\sqrt\alpha} \right)\|e_n\|_X = \frac{\epsilon}{3c_0 \alpha} \|e_n\|_X \le \frac{d_0\epsilon}{3}.
\end{equation}
For the chosen $\alpha$, we can also choose a number $N_0 = N_0(\alpha)$ such that for all $N\ge N_0$, we have
\begin{equation}\label{eq:con20}
\frac{9d_1d_4}{4\alpha}\Delta k \le \frac{\epsilon}{3}
\end{equation}
 and
\begin{equation}\label{eq:con212}
   \Delta k\left( 1 + \frac{d_2}{2d_0\sqrt\alpha}\right) \le \left[1 - \frac{\epsilon}{3}(2+d_0) \right] c_0\alpha.
\end{equation}
Note that the right hand side is positive. It follows from \eqref{eq:con32}--\eqref{eq:con212} that
$$ \|e^1_{n+1}\|_X \le\left[1 - \frac{\epsilon}{3}(2+d_0) \right]d_0c_0\alpha + \frac{\epsilon}{3}(2+d_0) \|e_n\|_X \le d_0c_0 \alpha .$$ 
Next, we estimate $e^{j+1}_{n+1}$ for $j = 1, \dots, J-1$. We rewrite them in the form
\begin{equation}\label{eq:con8-2}
\begin{split}
  e^{j+1}_{n+1} =&  \tilde r(k_{n+1}) - r^{j}_{n+1} + P_{n+1} R^{j}_{n+1} \tilde F(r^j_{n+1},k_{n+1})\\
      =& P_{n+1} \left[e^{j}_{n+1} - \tilde R^j_{n+1} \tilde A_{n+1} e^j_{n+1}\right] + P_{n+1}\tilde R^j_{n+1} \tilde A_{n+1} e^j_{n+1} + P_{n+1}R^j_{n+1} \tilde F(r^j_{n+1},k_{n+1}).
\end{split}
\end{equation}
By the same arguments as above, we obtain
\begin{equation*}
  \|e^{j+1}_{n+1}\|_X \le \frac{\alpha}{\alpha+ \sigma^2}\|e^j_{n+1}\|_X + \left(\frac{9d_1d_3}{4\alpha} + \frac{d_3}{4\sqrt\alpha} \right)\|e^j_{n+1}\|_X^2.
\end{equation*}
Hence, under the same conditions \eqref{eq:con18} and \eqref{eq:con19}
\begin{equation}\label{eq:con22-2}
  \|e^{j+1}_{n+1}\|_X \le  \frac{\epsilon}{3}(1+d_0) \|e^j_{n+1}\|_X < \|e^j_{n+1} \|_X, j = 1,\dots, J-1.
\end{equation}
Therefore, if $\|e_0\|_X \le d_0c_0\alpha$, we can prove by recurrence that $\|e^j_{n}\|_X\le d_0c_0\alpha$ for $j = 1,\dots, J$ and $n = 1, \dots, N$. From this it is clear that 
$$
r^j_n(t) \ge \tilde r(k_n,t) - e^j_n(t) \ge \tilde c - d_0c_0\alpha > 0 \text{ for all } j \text{ and } n.
$$
 Hence, $r_n(t) > 0$ for all $n$.   Moreover, 
\begin{equation*}\label{eq:con22}
  \|e_{n+1}\|_X \le \left(\frac{\epsilon(1+d_0)}{3}\right)^{J-1}\left[\Delta k\left( d_0 + \frac{d_2}{2\sqrt\alpha}\right) + \frac{\epsilon(2+d_0)}{3} \|e_n\|_X \right], \forall n = 0,\dots, N-1.
\end{equation*}
Consequently,
\begin{equation}\label{eq:con23}
\begin{split}
  \|e_{N}\|_X \le \left(\frac{\epsilon(1+d_0)}{3}\right)^{J-1} &\Delta k\left( d_0 + \frac{d_2}{2\sqrt\alpha}\right) \frac{1}{1 - \left(\frac{\epsilon(1+d_0)}{3}\right)^{J-1}\frac{\epsilon(2+d_0)}{3}}\\
 & + \left(\frac{\epsilon(1+d_0)}{3}\right)^{(J-1)N}\left(\frac{\epsilon(2+d_0)}{3}\right)^{N}\|e_0\|_X\\
   \le \left(\frac{\epsilon(1+d_0)}{3}\right)^{J-1} &\frac{\Delta k}{\sqrt\alpha}\left\{ \left( d_0 \frac{\sqrt{\epsilon}\sigma}{\sqrt{3-\epsilon}} + \frac{d_2}{2}\right)\frac{1 }{1 - \left(\frac{\epsilon(1+d_0)}{3}\right)^{J-1}\frac{\epsilon(2+d_0)}{3}} \right. \\
 & \left. +  \left(\frac{\epsilon(1+d_0)}{3}\right)^{(J-1)(N-1)}\frac{N \left(\frac{\epsilon(2+d_0)}{3}\right)^{N}}{k_{h}-k_{l}} d_0c_0 \left(\frac{\sqrt{\epsilon}\sigma}{\sqrt{3-\epsilon}}\right)^3\right\} .
\end{split}
\end{equation}
From \eqref{eq:c0} we can see that $c_0$ is bounded from above by 
\begin{equation}\label{eq:c02}
c_0 \le \dfrac{4\epsilon}{27 d_1 d_3}.   
\end{equation}
 Moreover, for a fixed frequency interval $[k_l,k_h]$, $N\epsilon^{N}$ is bounded in terms of $N$. Therefore, there exists a constant $C^* > 0$ independent of $N$ and $\alpha$ such that
\begin{equation}\label{eq:con24}
\begin{split}
& \left( d_0 \frac{\sqrt{\epsilon}\sigma}{\sqrt{3-\epsilon}} + \frac{d_2}{2}\right)\frac{1 }{1 - \left(\frac{\epsilon(1+d_0)}{3}\right)^{J-1}\frac{\epsilon(2+d_0)}{3}} \\
& +  \left(\frac{\epsilon(1+d_0)}{3}\right)^{(J-1)(N-1)}\frac{N \left(\frac{\epsilon(2+d_0)}{3}\right)^{N}}{k_{h}-k_{l}} d_0c_0 \left(\frac{\sqrt{\epsilon}\sigma}{\sqrt{3-\epsilon}}\right)^3 \le C^*.
\end{split}
\end{equation}
%
On the other hand, it follows from \eqref{eq:con20} that $\sqrt{\frac{\Delta k }{\alpha}} \le \sqrt{\frac{4\epsilon}{27d_1d_4}}$. 
Replacing these inequalities into \eqref{eq:con23} we obtain \eqref{eq:con380} with $C_1 = C^*\sqrt{\frac{4\epsilon}{27d_1d_4}}$.
\end{proof}

In the case of noisy data, we have the following result.

\begin{theorem}\label{th:con2}
  Assume that the radial functions $\tilde r(k_n)$ and the subspaces $X_n, n = 0, \dots, N,$  are as in Theorem \ref{the:con1}. Let $r_n, n = 0,\dots, N,$ be given by  Algorithm \ref{alg:1}. For fixed positive real numbers $\epsilon$, $\xi$, $0 < \epsilon < 3/(2+d_0), 0 < \xi < 1$, and for the parameters $\alpha$  and $c_0$ satisfying \eqref{eq:alpha} and \eqref{eq:c0} respectively, we define the positive parameter $\tilde\delta_0$ by
  \begin{equation}\label{eq:delta0}
   \tilde\delta_0 := 2\xi \left[1 - \frac{\epsilon(2+d_0)}{3}\right] d_0c_0 \alpha^{3/2}.
  \end{equation}  
Then there exists an integer $N_0$ independent of $\tilde \delta$ such that if \eqref{eq:cond0} is satisfied,  we have $r_{n}(t) > 0\  \forall  t\in [0,2\pi]$ and the following error estimate holds true
 \begin{equation}\label{eq:con38}
    \|\tilde r(k_{h}) - r_N\|_X \le C_2 \tilde \delta^{2/3} +  \left(\frac{\epsilon(1+d_0)}{3}\right)^{J-1} C_1 \sqrt{\Delta k} ,\ \forall N \ge N_0
  \end{equation}
  for every $\tilde\delta \le \tilde\delta_0$, where $C_1$ is as in Theorem \ref{the:con1} and $C_2$ is a constant independent of $\tilde\delta$, $\alpha$ and $N$.
\end{theorem}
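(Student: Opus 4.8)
The plan is to mirror the proof of Theorem \ref{the:con1} essentially line by line, the only genuinely new ingredient being the contribution of the measurement noise. I would start from the splitting \eqref{eq:F1}, $\tilde F_\delta(r,k) = \tilde F(r,k) + f^{\tilde\delta}(\tilde r(k),k)$ with $\|f^{\tilde\delta}(\tilde r(k),k)\|_2 \le \tilde\delta$. Because the algorithm now applies the regularized inverse $R^j_{n+1}$ to $\tilde F_\delta$ rather than to $\tilde F$, every place in the noiseless argument where a term $P_{n+1} R^j_{n+1}\tilde F(\cdot,k_{n+1})$ appeared now carries an additional summand $P_{n+1} R^j_{n+1} f^{\tilde\delta}(\tilde r(k_{n+1}),k_{n+1})$. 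By estimate \eqref{eq:A2} of Lemma~\ref{le:spectral} together with the noise bound, each such term is controlled by $\|R^j_{n+1}\|_{\mathcal L(Y,X)}\,\tilde\delta \le \frac{\tilde\delta}{2\sqrt\alpha}$, uniformly in $j$ and $n$.

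Carrying this extra $\frac{\tilde\delta}{2\sqrt\alpha}$ through the two estimates that drive Theorem \ref{the:con1}, I would obtain the perturbed recursions $\|e^1_{n+1}\|_X \le \bigl[1-\tfrac{\epsilon(2+d_0)}{3}\bigr] d_0 c_0\alpha + \tfrac{\epsilon(2+d_0)}{3}\|e_n\|_X + \tfrac{\tilde\delta}{2\sqrt\alpha}$ for the first Newton step and $\|e^{j+1}_{n+1}\|_X \le \tfrac{\epsilon(1+d_0)}{3}\|e^j_{n+1}\|_X + \tfrac{\tilde\delta}{2\sqrt\alpha}$ for $j\ge 1$, the non-noise parts being exactly those already derived via \eqref{eq:con19}, \eqref{eq:con20} and \eqref{eq:con212}. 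The definition \eqref{eq:delta0} of $\tilde\delta_0$ is calibrated precisely so that, for $\tilde\delta \le \tilde\delta_0$, one has $\frac{\tilde\delta}{2\sqrt\alpha} \le \xi\bigl[1-\tfrac{\epsilon(2+d_0)}{3}\bigr] d_0 c_0\alpha$ with $\xi<1$. With this at hand I would repeat the double induction (over the frequency index $n$ and the Newton index $j$) to show that $\|e^j_n\|_X$ remains bounded by a fixed multiple of $d_0 c_0\alpha$, still below $\tilde c$ by \eqref{eq:cond0}; this keeps the quadratic Taylor-remainder estimate \eqref{eq:con19} valid at every step and yields the positivity $r_n(t) > 0$ exactly as in the noiseless case. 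Note that the frequency-step conditions \eqref{eq:con20} and \eqref{eq:con212} fixing $N_0$ involve only $\alpha$ and $\epsilon$, so $N_0$ is the same as before and, in particular, independent of $\tilde\delta$.

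I would then iterate the two recursions. Summing the geometric series with ratio $\frac{\epsilon(1+d_0)}{3}<1$ over the $J$ Newton steps shows that the noise accumulated within a single frequency is at most $\frac{\tilde\delta}{2\sqrt\alpha}\,\bigl(1-\tfrac{\epsilon(1+d_0)}{3}\bigr)^{-1}$; feeding this into the contraction in $n$ exactly as in \eqref{eq:con23} produces a final bound that splits into two pieces. The first piece is driven purely by $\Delta k$ and is identical to the one in Theorem \ref{the:con1}, giving the term $\bigl(\tfrac{\epsilon(1+d_0)}{3}\bigr)^{J-1} C_1 \sqrt{\Delta k}$ of \eqref{eq:con38}; the second piece is proportional to $\frac{\tilde\delta}{\sqrt\alpha}$ with a constant independent of $N$, $\alpha$ and $\tilde\delta$. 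The last step is to convert $\frac{\tilde\delta}{\sqrt\alpha}$ into the advertised rate $\tilde\delta^{2/3}$: since $\tilde\delta \le \tilde\delta_0$ and $\tilde\delta_0$ equals a fixed constant times $\alpha^{3/2}$ by \eqref{eq:delta0}, we get $\sqrt\alpha \ge \mathrm{const}\cdot\tilde\delta^{1/3}$, hence $\frac{\tilde\delta}{\sqrt\alpha} \le C_2\,\tilde\delta^{2/3}$, and combining the two pieces yields \eqref{eq:con38}.

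The main obstacle I anticipate is not any single estimate but maintaining the \emph{self-consistency} of the a~priori bound on $\|e^j_n\|_X$ once the additive noise term is present: one must verify that the slightly inflated bound forced by $\frac{\tilde\delta}{2\sqrt\alpha}$ still lies in the regime where the quadratic remainder estimate and the positivity argument go through, which is exactly what the factor $\xi<1$ and the scaling $\tilde\delta_0\sim\alpha^{3/2}$ in \eqref{eq:delta0} are designed to guarantee. Everything else is a careful but routine repetition of the bookkeeping from Theorem \ref{the:con1}, the decisive conceptual observation being that the constraint $\tilde\delta\lesssim\alpha^{3/2}$ is precisely what upgrades the naive regularization error $\tilde\delta/\sqrt\alpha$ to the optimal-order $\tilde\delta^{2/3}$.
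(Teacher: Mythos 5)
Your overall route is the paper's: decompose via \eqref{eq:F1}, bound the extra term $P_{n+1}R^j_{n+1}f^{\tilde\delta}$ by $\tilde\delta/(2\sqrt\alpha)$ using \eqref{eq:A2}, rerun the double induction of Theorem \ref{the:con1} with this additive perturbation, and convert $\tilde\delta/\sqrt\alpha$ into $\tilde\delta^{2/3}$ through the calibration \eqref{eq:delta0} together with the bound \eqref{eq:c02} on $c_0$. The gap sits exactly at the point you yourself flag as the main obstacle, and your resolution of it does not work. You keep the \emph{noiseless} step-size condition \eqref{eq:con212} (so that, as you assert, ``$N_0$ is the same as before'') and then let the a priori bound inflate to ``a fixed multiple of $d_0c_0\alpha$''. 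But every contraction estimate is calibrated at the level $d_0c_0\alpha$: by \eqref{eq:c0} one has $\bigl(\tfrac{9d_1d_3}{4\alpha}+\tfrac{d_3}{4\sqrt\alpha}\bigr)\|e\|_X=\tfrac{\epsilon}{3c_0\alpha}\|e\|_X$, so \eqref{eq:con19} delivers the factor $\tfrac{d_0\epsilon}{3}$ \emph{only} when $\|e\|_X\le d_0c_0\alpha$; at an inflated level $\beta d_0c_0\alpha$, $\beta>1$, the factor becomes $\tfrac{\epsilon\beta d_0}{3}$, which the hypothesis $\epsilon<3/(2+d_0)$ does not control. Concretely, write the one-frequency recursion in units of $d_0c_0\alpha$ and set $q:=\tfrac{\epsilon(2+d_0)}{3}$; your scheme (take $J=1$, which the theorem permits) gives $\beta_{n+1}\le(1+\xi)(1-q)+\tfrac{2\epsilon}{3}\beta_n+\tfrac{\epsilon d_0}{3}\beta_n^2$. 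For $d_0=1$, $\epsilon=0.99$, $\xi=0.9$ this reads $\beta_{n+1}\le 0.019+0.66\,\beta_n+0.33\,\beta_n^2$, whose right-hand side exceeds $\beta_n$ for every $\beta_n\ge 1$; starting from $\beta_0=1$ (permitted by \eqref{eq:cond0}) your error bound grows without bound across frequencies instead of staying $O(\alpha)$. Independently, your positivity argument needs $\beta d_0c_0\alpha<\tilde c$ for the inflated $\beta>1$, which \eqref{eq:cond0} does not supply, and even when the inflated iteration stays bounded the per-Newton-step factor becomes $\tfrac{\epsilon(1+\beta d_0)}{3}$ rather than the claimed $\tfrac{\epsilon(1+d_0)}{3}$ in \eqref{eq:con38}.

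The paper's bookkeeping avoids any inflation: the total slack $\bigl[1-\tfrac{\epsilon(2+d_0)}{3}\bigr]d_0c_0\alpha$ is \emph{split} between the frequency step and the noise, imposing \eqref{eq:con21} (with the factor $1-\xi$) together with \eqref{eq:con34} (with the factor $\xi$), so the two additive terms jointly fit inside the slack and the induction closes with the uninflated bound $\|e^j_n\|_X\le d_0c_0\alpha$, exactly as in the noiseless case, for the full stated range of $\epsilon$ and $\xi$. The price is that $N_0$ is \emph{not} the same as in Theorem \ref{the:con1}: it is determined by \eqref{eq:con20} and the tightened condition \eqref{eq:con21}, hence depends on $\xi$ --- but it remains independent of $\tilde\delta$, which is all the theorem asserts. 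Once you make this correction, the rest of your plan (the recursions \eqref{eq:con501}, the geometric sums over $j$ and $n$ leading to \eqref{51}, and the conversion of $\tilde\delta/(2\sqrt\alpha)$ into $C_2\tilde\delta^{2/3}$) coincides with the paper's proof.
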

\begin{proof}
 Using \eqref{eq:F1} we can rewrite the error as
\begin{equation}\label{eq:con01}
  e^{j+1}_{n+1} = \tilde r(k_{n+1}) - r^j_{n+1} + P_{n+1}R^j_{n+1} \tilde F(r^j_{n+1},k_{n+1}) + P_{n+1} R^j_{n+1} f^{\tilde\delta}(\tilde r(k_{n+1}),k_{n+1})
\end{equation}
for $j = 0,\dots, J-1$. It follows from Lemma \ref{le:spectral} that
\begin{equation}\label{eq:con143}
 \|P_{n+1}R^j_{n+1}
 f^{\tilde\delta}(\tilde r(k_{n+1}),k_{n+1})\|_X \le \frac{\tilde \delta}{2\sqrt\alpha}.
\end{equation}
Using the estimates \eqref{eq:con32} and \eqref{eq:con22-2} for the noiseless case, from \eqref{eq:con01}--\eqref{eq:con143} we have
\begin{equation}\label{eq:con322}
 \begin{split}
   \|e^1_{n+1}\|_X & \le \Delta k\left( d_0 + \frac{d_2}{2\sqrt\alpha}\right) +\frac{\tilde\delta}{2\sqrt\alpha}+  \frac{\alpha}{\alpha+ \sigma^2}\|e_n\|_X \\
   & + \frac{9d_1d_4}{4\alpha}\Delta k \|e_n\|_X + \left(\frac{9d_1d_3}{4\alpha} + \frac{d_3}{4\sqrt\alpha} \right)\|e_n\|_X^2.
 \end{split}
\end{equation}
And for $j = 0,\dots, J-1$, we obtain
\begin{equation}\label{eq:con22-3}
  \|e^{j+1}_{n+1}\|_X \le  \frac{\tilde\delta}{2\sqrt\alpha} +  \frac{\alpha}{\alpha+ \sigma^2}\|e^j_{n+1}\|_X  + \left(\frac{9d_1d_3}{4\alpha} + \frac{d_3}{4\sqrt\alpha} \right)\|e^j_{n+1}\|_X^2. 
\end{equation}
For $\tilde \delta\le \tilde\delta_0$, we have from \eqref{eq:alpha} and \eqref{eq:delta0} that
\begin{equation*}
 \left(\frac{\tilde \delta}{2\xi \left[1 - \frac{\epsilon(2+d_0)}{3}\right] d_0 c_0}\right)^{2/3} \le \alpha \le \frac{\epsilon}{3-\epsilon}\sigma^2.
\end{equation*}
Or, equivalently, $\alpha$ satisfies \eqref{eq:con18} and the following inequality
 \begin{equation}\label{eq:con34}
 \frac{\tilde \delta}{2\sqrt\alpha}\le \xi \left[1 - \frac{\epsilon(2+d_0)}{3}\right]d_0c_0\alpha.
 \end{equation}
On the other hand, there exists $N_0$ such that  condition \eqref{eq:con20}  is satisfied for all $N > N_0$ and
 \begin{equation}\label{eq:con21}
    \Delta k\left( d_0 + \frac{d_2}{2\sqrt\alpha}\right) \le (1 - \xi)\left[1 - \frac{\epsilon(2+d_0)}{3}\right]d_0c_0\alpha,
 \end{equation}
Now using the same arguments as in the proof of Theorem \ref{the:con1}, we can show that $\|e^j_{n}\|_X \le d_0c_0 \alpha$ for all $j = 0,\dots, J; n = 1,\dots, N$, if \eqref{eq:cond0} is satisfied. This implies the positivity of $r_n$ as in Theorem \ref{the:con1}. Moreover,
\begin{equation}\label{eq:con501}
\begin{split}
\|e^1_{n+1}\|_X &\le \Delta k\left( d_0 + \frac{d_2}{2\sqrt\alpha}\right)  + \frac{\tilde \delta}{2\sqrt\alpha} + \frac{\epsilon(2+d_0)}{3} \|e_n\|_X,\\
\|e^{j+1}_{n+1}\|_X &\le \frac{\tilde \delta}{2\sqrt\alpha} + \frac{\epsilon(1+d_0)}{3} \|e^j_{n+1}\|_X, j = 1,\dots, J-1.
\end{split}
\end{equation}
Consequently, for $n = 0,\dots, N-1,$ we have 
\begin{equation}\label{eq:con502}
 \|e_{n+1} \|_X \le \frac{\tilde \delta}{2\sqrt\alpha}\frac{1}{1 - \frac{\epsilon(1+d_0)}{3}} + \left( \frac{\epsilon(1+d_0)}{3}\right)^{J-1} \left[ \Delta k\left( d_0 + \frac{d_2}{2\sqrt\alpha}\right) +  \frac{\epsilon(2+d_0)}{3} \|e_n\|_X \right].
\end{equation}
Hence, 
\begin{eqnarray}
 \|e_{N}\|_X  &\le& \frac{\tilde \delta}{2\sqrt\alpha \left[ 1 - \frac{\epsilon(1+d_0)}{3} \right] \left[1 -\left(\frac{\epsilon(1+d_0)}{3}\right)^{J-1}\frac{\epsilon(2+d_0)}{3}\right]} \nonumber\\
  &+&  \left(\frac{\epsilon(1+d_0)}{3}\right)^{J-1} C_1\sqrt{\Delta k}.\label{51}
\end{eqnarray}
Here the constant $C$ is the same as in Theorem \ref{the:con1}.   Finally, taking into account the condition (\ref{eq:con34}) we obtain (\ref{eq:con38}) with the constant $C_2$ given by
$$
C_2 = \frac{(\xi d_0 \frac{4\epsilon}{27d_1d_3})^{1/3}}{\left[2\left( 1 - \frac{\epsilon(1+d_0)}{3} \right)  \right]^{2/3} \left[1 -\left(\frac{\epsilon(1+d_0)}{3}\right)^{J-1}\frac{\epsilon(2+d_0)}{3}\right]}.
$$
The proof is complete.
\end{proof}
\begin{remark}\label{trade-off}

To obtain the H\"older type error estimate of the form $\|e_{N}\|_X = O(\tilde \delta^{2/3}) $, 
we require that $\left(\frac{\epsilon(1+d_0)}{3}\right)^{J-1} \sqrt{\Delta k} = O(\tilde \delta^{2/3})$. 
That means, if $\Delta k$ is small, we do not need to use many Newton iterations and vice-verse. 
In other words, there is a trade-off between the frequency step $\Delta k$ and the number of Newton iterations for a given accuracy.   

\end{remark}

\subsection{Discussion on the link between the true shape and the observable shapes}\label{discussion--}

 Theorems \ref{the:con1} and \ref{th:con2} show the accuracy of the reconstruction of the observable shapes. The final accuracy of the algorithm with respect to the true shape depends on the stability of the reconstruction problem under investigation. When the final frequency $k_h$ is very high,   a H\"older type stability estimate was proved in \cite{S-T:IPI2012} for the part of the boundary illuminated by the incident wave. Hence a natural question arises: are the error estimates of Theorems \ref{the:con1} and \ref{th:con2}  uniform with respect to frequency interval when $k_h$ becomes very large. The answer to this question depends on the dependence of the constants $C_1$ and $C$ on the frequency interval. For simplicity, we assume that the lowest frequency is fixed and the same frequency step is used in all frequency intervals. Below we give a heuristic, non-rigorous explanation about which factors could affect the error estimates when $k_h$ increases. 
 
 First of all, we know that the higher the frequency, the better the stability of the reconstruction problem. Therefore the observable shapes should become closer and closer. As a result, the constant $d_0$ in Assumption 2 should not increase when $k_h$ is increased. Second, we can see from \eqref{eq:con4} that $d_1$, $d_2$ and $d_3$ are non-decreasing. Moreover, since $c_0$ can be bounded from above by a constant which is not increased when $k_h$ increases, see \eqref{eq:c02}. Therefore, the constant $C_2$ is non-increasing. 
 
 Concerning the constant $C_1$, from \eqref{eq:con24} it follows that the second term is non-increasing. Indeed, for a given frequency step $\Delta k$, we have
 \begin{equation*}
  \frac{N\left(\frac{\epsilon(2+d_0)}{3}\right)^N}{k_h - k_l} =  \frac{\left(\frac{\epsilon(2+d_0)}{3}\right)^N}{\Delta k}.
 \end{equation*}
That means, it is non-increasing when $k_h$ increases if the frequency step is kept fixed. The other factors of the second term of \eqref{eq:con24} are clearly non-increasing. Hence, the only factor which could cause the constant $C_1$ to increase is $d_2$ in the first term of \eqref{eq:con24}. 

The question on how this factor $d_2$ depends on the frequency is still open to us. 
Note however that, based on integral equation methods, precisely the explicit dependence of the norms of the corresponding boundary integral operators
 in terms of the frequencies, see \cite{Ch-Mo:2008, M-Me} for instance, we infer that $d_2$ 
increases as $k_h$ increases, but at a moderate rate, i.e. polynomially. 
Then we can eliminate its effect on the constant $C_1$ by increasing the number of Newton steps at each frequency. 
 We will investigate this question in a future work.

\section{Multi-level Newton method}\label{sec:mNm}

In this section, we discuss how to obtain the comparable error estimates as in the previous section but with a less restrictive condition than \eqref{eq:cond0} concerning the reconstruction at the lowest frequency. 
For this purpose, we use a multi-level Newton method which is described hereafter. 

We recall that the error estimate \eqref{eq:con38} was obtained under the conditions \eqref{eq:con18}, \eqref{eq:con20}, \eqref{eq:con34} and \eqref{eq:con21}. 
In this section, we choose $\xi=1/2$ for simplicity. To make the analysis easier to follow, we rewrite the above conditions here
\begin{eqnarray}
 &&\alpha \le  \frac{\epsilon\sigma^2}{3 - \epsilon},\label{eq31} \\
&&\frac{9d_1d_4}{4\alpha}\Delta k \le \frac{\epsilon}{3},\label{eq32}\\
 &&   \Delta k\left( d_0 + \frac{d_2}{2\sqrt\alpha}\right) \le \frac{1}{2}\left[1 - \frac{\epsilon(2+d_0)}{3}\right]d_0c_0\alpha,\label{eq33}\\
 &&\frac{\tilde \delta}{2\sqrt\alpha}\le  \frac{1}{2} \left[1 - \frac{\epsilon(2+d_0)}{3}\right]d_0c_0\alpha, \label{eq34}
\end{eqnarray}
with the constant $c_0$ being given by \eqref{eq:c0} which depends on $\alpha$. Therefore, in the following, we denote by $c_0(\alpha)$ to indicate this dependence. We reserve the notations $c_0$ and $\alpha$ for the constants in the previous section, i.e.~these constants associate with the full frequency set. So Theorem \ref{th:con2} says that if the conditions \eqref{eq31}--\eqref{eq34} are satisfied, and if the solution $r_0$ at the lowest wavenumber $k_0$ satisfies \eqref{eq:cond0}, i.e. 
\begin{equation}\label{eq362}
    \|\tilde r(k_{l}) - r_0\|_X \le d_0c_0\alpha < \tilde c,
\end{equation}
then the final error estimate \eqref{eq:con38} holds true.

We remark that the regularization parameter $\alpha$ depends on the smallest singular value $\sigma$ of the domain derivative $\tilde A_{n}, n = 0, 1, \dots, N$. Clearly, the more frequencies used, the smaller this singular value $\sigma$ is. Therefore, by subdividing the original interval of frequencies into sub-intervals and choosing this regularization parameter depending on the smallest singular value in different frequency sub-intervals, we may not need to choose a small regularization parameter (in other words, with a less restrictive condition on the initial guess) at the first sub-interval but still obtain a comparable error estimate as \eqref{eq:con38}. 

To make the following analysis consistent with the previous section, we still consider the set of frequencies $k_0 = k_l, \dots, k_N = k_h$ with step size $\Delta k$ as in section \ref{sec:RLA}. Suppose that the original frequency interval $\{k_0,\dots, k_N\}$ is divided into $M$ sub-intervals from low to high frequencies. These sub-intervals do not need to have the same number of frequencies. We denote by $\tilde \sigma_m$ the smallest singular value in the $m$-th sub-interval. That is,
$$
\tilde\sigma_m = \min\{\sigma_n, k_n \text{ belongs to the $m$-th sub-interval}\}.
$$
Here $\sigma_n$ the smallest singular value of $\tilde A_n\big\vert_{X_{n+1}}$ as in section \ref{sec:RLA}. 
Moreover, we choose the sequence of parameters $\hat \sigma_m, m = 1, \dots, M$, as follows:
$$
\hat\sigma_1 = \tilde \sigma_1, \quad \hat\sigma_{m+1} = \min\{\hat \sigma_m, \tilde \sigma_{m+1}\}, m = 1,\dots, M-1.
$$
by this choice of the parameters $\hat \sigma_m$, it is clear that 
\begin{equation}\label{eq:sigmam}
   \hat \sigma_1 \ge \hat\sigma_2\ge \cdots \ge \hat\sigma_M \ge \sigma.
\end{equation}
Associated with these sub-intervals, we choose the set of regularization parameters $\alpha_m$, $m = 1,\dots, M$ such that \eqref{eq31} is satisfied in each sub-intervals, 
where $\sigma$ is replaced by the corresponding parameter $\hat \sigma_m$. That is,
\begin{equation}
   \alpha_m \le  \frac{\epsilon\hat\sigma_m^2}{3 - \epsilon}, m = 1, \dots, M.\label{eq312}
\end{equation}
Moreover, $\alpha_m$ are also chosen such that 
\begin{equation}\label{eq:alpham}
   \alpha_1 \ge \alpha_2\ge \cdots \ge \alpha_M \ge \alpha.
\end{equation}
The multi-level Newton algorithm can be summarized as follows.

\begin{algorithm}\label{alg:2}
$ $
\begin{itemize}
\item Given measured data  $u^{\infty,\delta}_m(\cdot,k)$ for $k =
k_0, \dots, k_N$, and the partition of this frequency interval into $M$ sub-intervals.
\item Step 1: find an approximation $r_0 \in X_0$ at frequency $k_0$.
\item Step 2: For $m = 1,\dots, M$
    \begin{itemize}
    \item Choose $\alpha_m$ satisfying \eqref{eq312} and \eqref{eq:alpham}.
    \item Use Algorithm \ref{alg:1} to find an approximation in the $m$-th frequency sub-interval. 
    \end{itemize}
\end{itemize}
\end{algorithm}

Let us show a similar convergence result as in Theorem \ref{th:con2} for this algorithm. From \eqref{eq:c0} and \eqref{eq:alpham} it can be proved using elementary analysis that 
\begin{equation}\label{eq352}
 c_0(\alpha_1) \alpha_1 \ge c_0(\alpha_2) \alpha_2 \ge \cdots \ge c_0(\alpha_M) \alpha_M \ge c_0 \alpha.
\end{equation}
We recall that $c_0$ and $\alpha$ are associated with the whole frequency interval $\{k_0,\dots, k_N\}$. It also follows from \eqref{eq:alpham} and \eqref{eq352} that the inequalities \eqref{eq32}--\eqref{eq34} still hold for the same frequency step $\Delta k$ and noise level as in Theorems \ref{the:con1} and \ref{th:con2} when $\alpha$ is replaced by $\alpha_m $ and $c_0$ by $c_0(\alpha_m)$. That means, all the conditions of these theorems are satisfied for each sub-interval.

Now we replace the condition \eqref{eq:cond0} in Theorems \ref{the:con1} and \ref{th:con2} by the following one for the first sub-interval:
\begin{equation}\label{eq:a1}
    \|\tilde r(k_{l}) - r_0\|_X \le d_0c_0(\sigma_1)\alpha_1 < \tilde c.
\end{equation}
 Hence, from Theorem \ref{th:con2} (see \eqref{51}) we obtain the following error estimate in the first sub-interval
\begin{eqnarray}
 \|\tilde r(k_{N_1}) - r_{N_1}\|_X  &\le& \frac{\tilde \delta}{2\sqrt\alpha_1 \left[ 1 - \frac{\epsilon(1+d_0)}{3} \right] \left[1 -\left(\frac{\epsilon(1+d_0)}{3}\right)^{J-1}\frac{\epsilon(2+d_0)}{3}\right]} \nonumber\\
  &+&  \tilde C_1\left(\frac{\epsilon(1+d_0)}{3}\right)^{J-1} \sqrt{\Delta k}.\nonumber \\
  &\le& \frac{\tilde \delta}{2\sqrt\alpha \left[ 1 - \frac{\epsilon(1+d_0)}{3} \right] \left[1 -\left(\frac{\epsilon(1+d_0)}{3}\right)^{J-1}\frac{\epsilon(2+d_0)}{3}\right]} \nonumber\\
  &+&  \tilde C_1\left(\frac{\epsilon(1+d_0)}{3}\right)^{J-1} \sqrt{\Delta k}.\label{eq36}
\end{eqnarray}
 for a constant $\tilde C_1$.  This constant can be chosen fixed for all frequency sub-intervals and independent of $\tilde\delta$. Here $k_{N_1}$ is the maximum frequency of the first sub-interval. It follows from (\ref{eq33}) and \eqref{eq36} that
\begin{equation}
 \|\tilde r(k_{N_1}) - r_{N_1}\|_X  \le\frac{d_0c_0\alpha}{2\left[1 -\left(\frac{\epsilon(1+d_0)}{3}\right)^{J-1}\frac{\epsilon(2+d_0)}{3}\right]}+  \tilde C_1\left(\frac{\epsilon(1+d_0)}{3}\right)^{J-1} \sqrt{\Delta k}.\label{eq:a2}
\end{equation}
In the second sub-interval, we use the final approximation $r_{N_1}$ of the first sub-interval as the initial guess, i.e., it plays the same role as $r_0$ in section \ref{sec:RLA}. For the given frequency step $\Delta k$, we can choose the number of Newton iterations $J$ large enough so that the following inequality holds true
\begin{equation}\label{eq:a3}
  \|\tilde r(k_{N_1}) - r_{N_1}\|_X \le d_0c_0\alpha_2.
\end{equation}
This process can be continued until the last sub-interval. In the last sub-interval, we obtain a similar error estimate as \eqref{eq:con38}. We summarize the above analysis in the following theorem.
\begin{theorem}\label{th:multi}
 Suppose that the frequency set $\{k_0, \dots, k_N\}$ is subdivided into $M$ sub-intervals. Denote by $N_m$ the number of frequencies in the $m$-th sub-interval.  Moreover, let $\epsilon$ be a positive real number satisfying $0 < \epsilon < 3/(2+d_0)$, and $\alpha_m$, $m = 1,\dots, M$, be the regularization parameters satisfying \eqref{eq312} and \eqref{eq:alpham}. We also suppose that the frequency step is small enough so that the conditions \eqref{eq32} and \eqref{eq33} are fulfilled for $\alpha = \min\{\alpha_m, \ m = 1, \dots, M\}$ and $c_0 = c_0(\alpha)$ given by \eqref{eq:c0}.  
Then there exists an integer $J$ large enough such that if the reconstruction at the lowest frequency satisfies \eqref{eq:a1},  we have $r_{n}(t) > 0\  \forall  t\in [0,2\pi]$ and the following error estimate holds true
 \begin{equation}\label{eq:con3822}
    \|\tilde r(k_{h}) - r_N\|_X \le C_2 \tilde \delta^{2/3} +  \left(\frac{\epsilon(1+d_0)}{3}\right)^{J-1} \tilde C_1 \sqrt{\Delta k},
  \end{equation}
  for every $\tilde\delta \le \tilde\delta_0$, where $C_2$ is as in Theorem \ref{th:con2} and $\tilde C_1$ is a constant independent of $\tilde\delta$, $\alpha_m$ and $N$. Here $\tilde\delta_0$ is defined as in \eqref{eq:delta0}.

 \end{theorem}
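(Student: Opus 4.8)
The plan is to prove the theorem by induction on the frequency sub-intervals $m = 1,\dots, M$, regarding each of them as a single instance of the scheme already analysed in Theorem \ref{th:con2}. Most of the groundwork has been prepared in the discussion preceding the statement: by the monotonicity \eqref{eq:alpham} of the regularization parameters together with \eqref{eq352}, the conditions \eqref{eq32}--\eqref{eq34} remain in force on every sub-interval once $\alpha$ is replaced by $\alpha_m$ and $c_0$ by $c_0(\alpha_m)$. Hence all the hypotheses of Theorem \ref{th:con2} are met on each sub-interval separately, provided the approximation fed into it lies in the corresponding basin $\{\,\|\tilde r - r\|_X \le d_0 c_0(\alpha_m)\alpha_m\,\}$. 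The positivity $r_n(t) > 0$ on each sub-interval is then inherited directly from the conclusion of Theorem \ref{th:con2}, so it only remains to control the propagation of the error from one sub-interval to the next.

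First I would establish the base case $m = 1$. Assumption \eqref{eq:a1} places $r_0$ in the basin associated with $\alpha_1$, so Theorem \ref{th:con2} applies verbatim on the first sub-interval and yields \eqref{eq36}; bounding the noise term by means of $\alpha_1 \ge \alpha$ and \eqref{eq34}, and using \eqref{eq33}, produces the estimate \eqref{eq:a2} at the top frequency $k_{N_1}$. The decisive structural feature is that, after this bounding, the initial error has been completely absorbed and the right-hand side of \eqref{eq:a2} is expressed entirely in terms of the smallest basin radius $d_0 c_0 \alpha$ (with $\alpha = \min_m \alpha_m$ and $c_0 = c_0(\alpha)$): it consists of a term proportional to $d_0 c_0\alpha$, with proportionality factor tending to $1/2$ as $J\to\infty$, plus a residual $\tilde C_1 (\epsilon(1+d_0)/3)^{J-1}\sqrt{\Delta k}$. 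The same bound, with the same constant $\tilde C_1$, is obtained at the end of any sub-interval, since the bounding of the noise term uses only $\alpha_m \ge \alpha$ and is therefore uniform in $m$.

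Next I would carry out the inductive step. Suppose the output $r_{N_m}$ of the $m$-th sub-interval satisfies the basin condition $\|\tilde r(k_{N_m}) - r_{N_m}\|_X \le d_0 c_0(\alpha_{m+1})\alpha_{m+1}$; this is precisely condition \eqref{eq:cond0} read on the $(m+1)$-th sub-interval, so Theorem \ref{th:con2} applies there and delivers the same uniform bound of the type \eqref{eq:a2}. To close the induction I must verify that this output again falls in the next basin. Since, by \eqref{eq352}, every target radius satisfies $d_0 c_0(\alpha_{m+1})\alpha_{m+1} \ge d_0 c_0 \alpha$, and since the factor $1 - (\epsilon(1+d_0)/3)^{J-1}\epsilon(2+d_0)/3$ appearing in the denominator of \eqref{eq:a2} tends to $1$ as $J \to \infty$, the leading term of \eqref{eq:a2} is strictly below $d_0 c_0\alpha$ for $J$ large, leaving room to absorb the residual $\tilde C_1(\epsilon(1+d_0)/3)^{J-1}\sqrt{\Delta k}$. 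Because there are only finitely many sub-intervals and both the bound and $\tilde C_1$ are uniform in $m$, a single integer $J$ can be chosen that makes \eqref{eq:a3} hold at every step, so the induction propagates.

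Finally I would treat the last sub-interval. Applying Theorem \ref{th:con2} there gives the estimate \eqref{51} with $\alpha_M$ in place of $\alpha$; since $\alpha_M \ge \alpha$ by \eqref{eq:alpham}, the noise-dependent term is dominated by $\tilde\delta/(2\sqrt\alpha)$ times the same constant factor, and converting this through \eqref{eq34} and the hypothesis $\tilde\delta \le \tilde\delta_0$ exactly as at the end of the proof of Theorem \ref{th:con2} yields the H\"older term $C_2\tilde\delta^{2/3}$ with the same constant $C_2$, giving \eqref{eq:con3822}. The only genuinely new ingredient compared with Theorem \ref{th:con2}, and the step I expect to require the most care, is the bookkeeping of this induction: one must ensure that the error handed from one sub-interval to the next never escapes the basins $d_0 c_0(\alpha_m)\alpha_m$, which shrink along the induction. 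This is exactly where the factor $1/2$ built into \eqref{eq33}--\eqref{eq34} is essential, as it keeps the leading error term below the uniform radius $d_0 c_0\alpha$ and leaves a margin that the residual $\sqrt{\Delta k}$-term can be forced into by enlarging $J$, while the monotonicity \eqref{eq352} guarantees the target basins never drop below $d_0 c_0\alpha$.
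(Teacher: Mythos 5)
Your proposal is correct and follows essentially the same route as the paper: the paper's own proof is exactly the discussion preceding the theorem statement---verify that \eqref{eq32}--\eqref{eq34} persist on each sub-interval via \eqref{eq:alpham} and \eqref{eq352}, apply Theorem \ref{th:con2} sub-interval by sub-interval starting from \eqref{eq:a1}, choose $J$ large enough that each output lands in the next basin (condition \eqref{eq:a3}), and read off \eqref{eq:con3822} on the last sub-interval. Your write-up even makes explicit two points the paper glosses over: that a single $J$ works for all $m$ because the bound \eqref{eq:a2} and the constant $\tilde C_1$ are uniform in $m$ and $M$ is finite, and that the noise term in \eqref{eq36} is controlled by \eqref{eq34} (the paper's citation of \eqref{eq33} at that step is evidently a slip).
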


\begin{remark}
 Theorem \ref{th:multi} indicates that we still obtain the same error estimate as in Theorem \ref{th:con2} with $C_1$ being replaced by $\tilde C_1$. 
 That means, by using the multi-level algorithm, we can obtain basically the same error estimate as in Theorem \ref{th:con2} with the first guess $r_0$
satisfying the condition \eqref{eq:a1} which is, in general, weaker than \eqref{eq:cond0} due to \eqref{eq352}. 
This issue is related to estimating the lower bounds of the singular values $\sigma_1$. Actually, at each level $m$, $m=1, ..., M$, we take the regularization parameter
$\alpha$ satisfying similar estimate, i.e (\ref{eq312}). In a forthcoming work, we will investigate the lower bound of $\sigma_m$ in terms of the frequency $\kappa$ and the dimension of the 
corresponding space $X_{n+1}$. With such estimates at hand, the regularization parameter $\alpha_m$ can be chosen compared to the known quantities $k$ and $n$. Let us finally make some comments on the condition
 (\ref{eq34}) on the noise level. As the frequency becomes high, $\alpha$ becomes small and so for the noise level. However this is quite natural since at high frequencies we expect to reconstruct
 small details and this makes sense only if the measurements at hand are not so noisy.
\end{remark}

\section{Numerical results}\label{sec:num}
 In this section, we show some numerical results to demonstrate the performance of the proposed algorithms. We also compare reconstruction results using these algorithms with the recursive linearization algorithm. 

In these tests, we considered flower-shaped obstacles defined by the equation 
$$
 \{ x(t) =
c_1(1 + c_2 \cos c_3 t)(\cos t, \sin t) , t\in [0,2\pi) \}
$$
with
positive constants $c_1$, $c_2$ and $c_3$. The first parameter
determines the area of the obstacle, the second one relates to the
curvature and the last one determines the number of petals of the
"flower". Two obstacles were considered which correspond to two sets of parameters: $c_1 = 2$, $c_2 = 0.3$ and $c_3 = 4$ (obstacle 1), and $c_1 = 2$, $c_2 = 0.2$ and $c_3 = 9$ (obstacle 2).

The measured far field patterns $u_m^\infty(\cdot,k_j,r)$, $j =
0,\dots, N$, used in these tests were simulated as the solution of
the forward problem \eqref{eq:1}--\eqref{eq:3} which was solved by
the integral equation method~\cite{C-K:1998}. We used $16$ observation
directions uniformly distributed on the unit circle. The same method was
also used to calculate the domain derivative of the far field
operator. Additive random
noise of  $5\%$ was added to the computed far field patterns.

Our numerical tests in \cite{S-T:IPI2012} have indicated that although the regularization parameter $\alpha$ must satisfies conditions \eqref{eq:con18} and \eqref{eq:con34} in the theoretical analysis, numerical performance seemed to be more optimistic. In our tests, this parameter could be chosen in a wide range, say, from $10^{-6} $ to
$10^{-1}$ which still provided good reconstruction results. Therefore, in the following examples, the regularization parameter $\alpha$ was chosen to be $10^{-2}$. We fixed the direction of incidence to be $\theta = (-\frac{1}{2},\frac{\sqrt{3}}{2}) $.  The wavenumbers were chosen between $k_l = 0.5$ and $k_h = 8$. 

The approximation $r_0$ at the lowest frequency was computed as follows: we first approximated the obstacle by a circle. In this case, the center and radius of the approximating circle were found by minimizing the corresponding least-squares objective functional. Then, three Fourier coefficients were chosen to represent $r_0$. These parameters were then found by solving again the least-squares minimization problem using the Matlab optimization routine \textit{fmincon}. Our results showed that these optimization problems were stable, therefore we did not need a good initial guess. However, we obtained only an approximation of the low frequency information of the obstacle in this step.  

\begin{figure}[ht]
 \centering 
 \subfloat[]{\includegraphics[width= 0.48\textwidth,height= 0.44\textwidth]{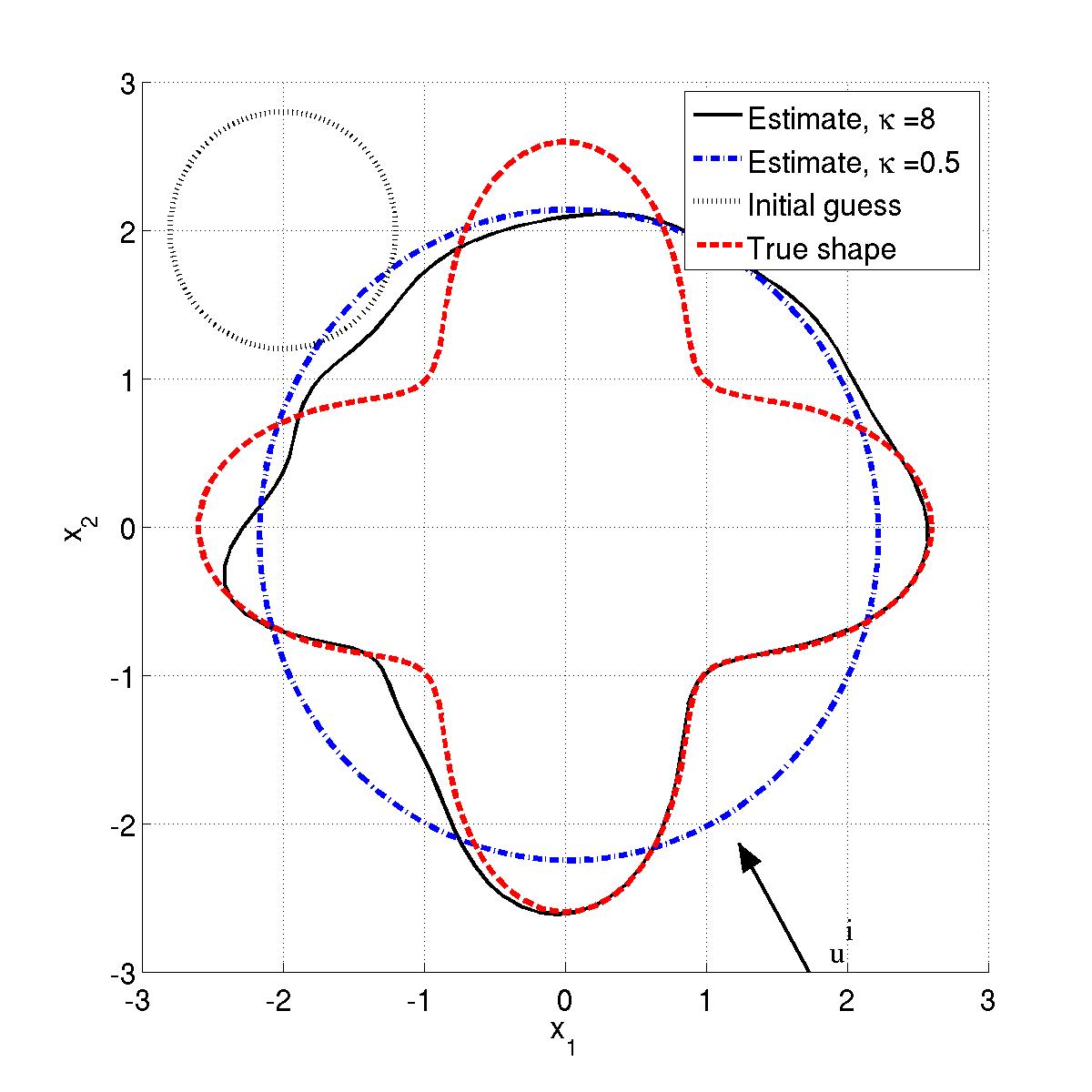}}
 \subfloat[]{\includegraphics[width= 0.48\textwidth, height=0.44\textwidth]{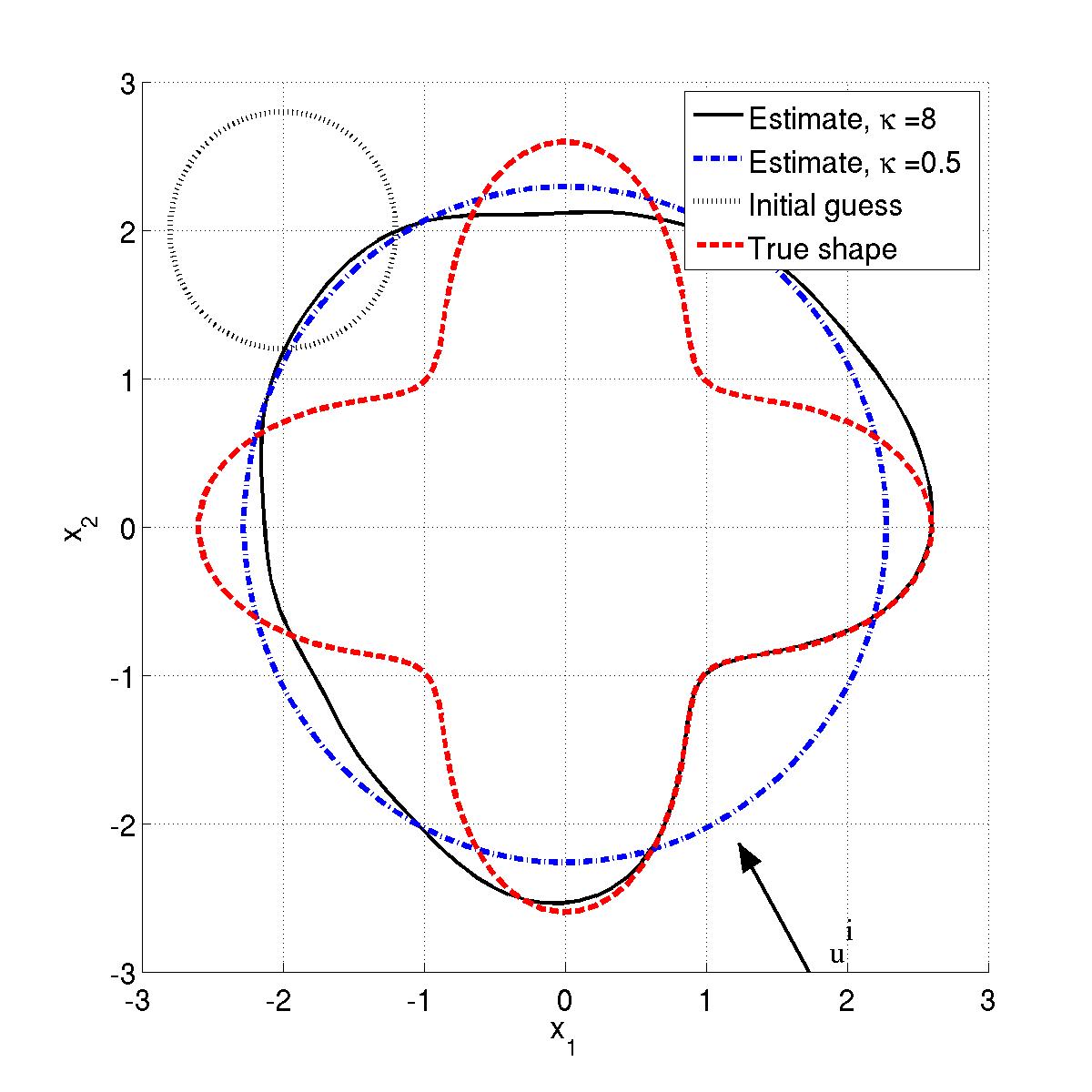}}
 \caption{Reconstruction of obstacle 1 using 12 wavenumbers: (a)  4 Newton iterations; (b) 1 Newton iteration.} 
 \label{fig:1}
\end{figure}

Figure \ref{fig:1} shows the reconstruction of obstacle 1 using 12 wavenumbers. In Figure \ref{fig:1}(a), 4 Newton iterations at each frequency were used while only 1 Newton iteration at each frequency was used in Figure \ref{fig:1}(b). We can see that the first one is more accurate than the second one. We remark that, as pointed out in \cite{S-T:IPI2012}, the reconstruction is good in the part of the obstacle illuminated by the incident plane wave but the details of the shadow part is not well reconstructed.   

\begin{figure}[ht]
 \centering 
 \subfloat[]{\includegraphics[width= 0.48\textwidth,height= 0.44\textwidth]{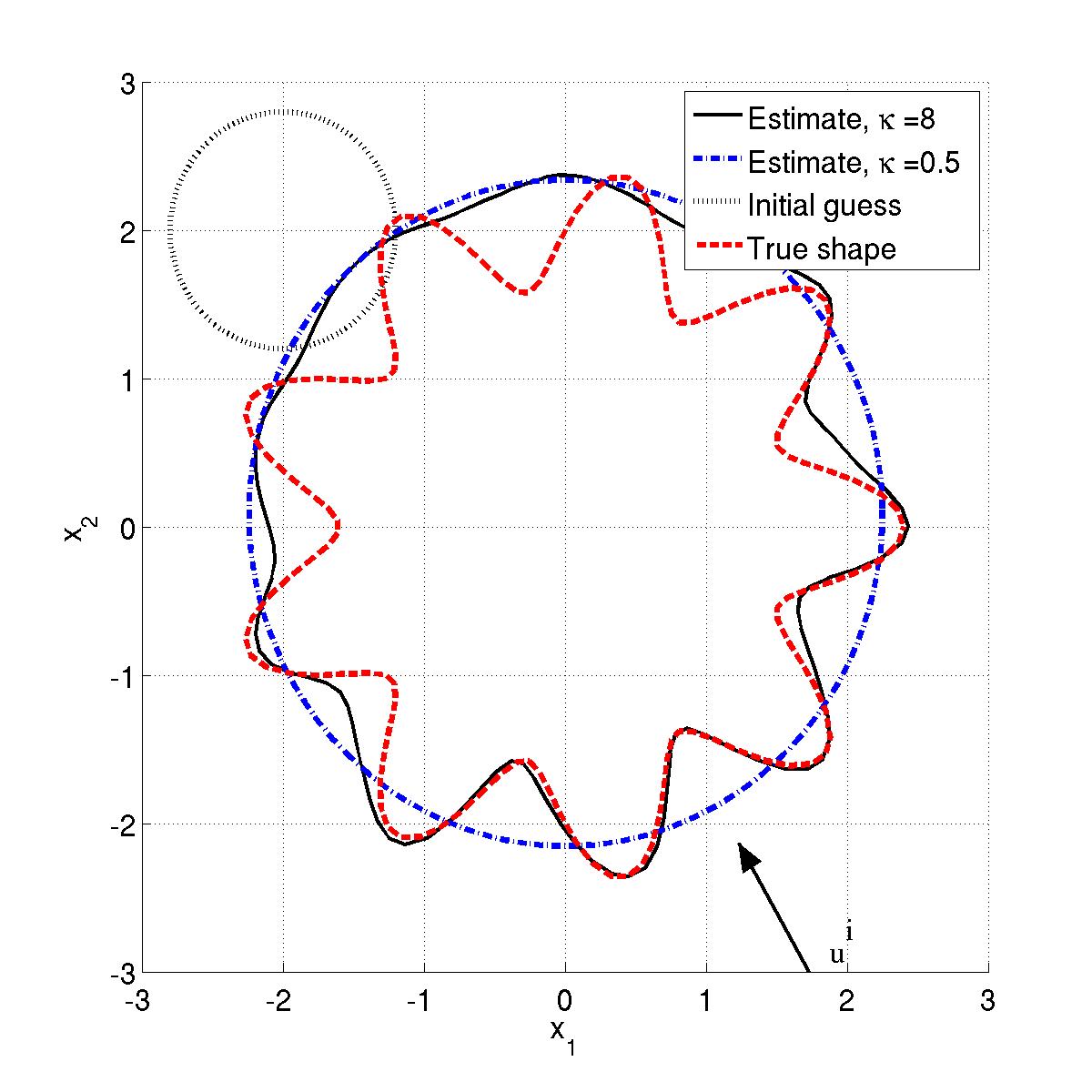}}
 \subfloat[]{\includegraphics[width= 0.48\textwidth, height=0.44\textwidth]{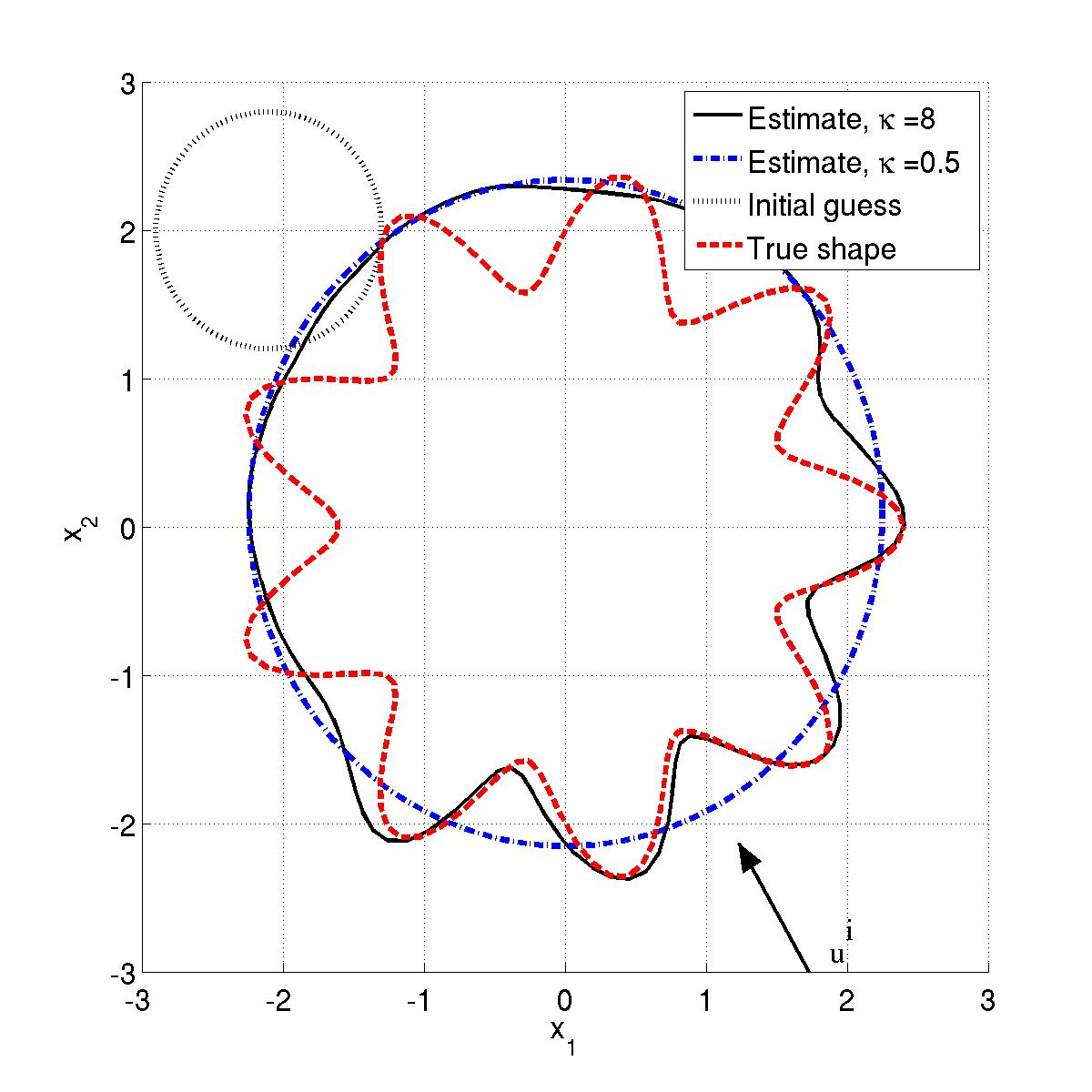}}
 \caption{Reconstruction of obstacle 2 using 20 wavenumbers: (a) 4 Newton iterations; (b) 1 Newton iteration.} 
 \label{fig:2}
\end{figure}

In Figure \ref{fig:2} we depict the reconstruction of obstacle 2. For this obstacle, 20 wavenumbers were used in order to reconstruct its small detailed features. We also can see that using 4 Newton iterations improved the accuracy compared to using only 1 iteration. We would like to emphasize that this improvement is more clear for a smaller number of frequencies or a larger number of Newton iterations, see Figure \ref{fig:4} for the results of obstacle 2 using 16 wavenumbers.
  
\begin{figure}[ht]
 \centering 
  \subfloat[]{\includegraphics[width= 0.48\textwidth,height= 0.44\textwidth]{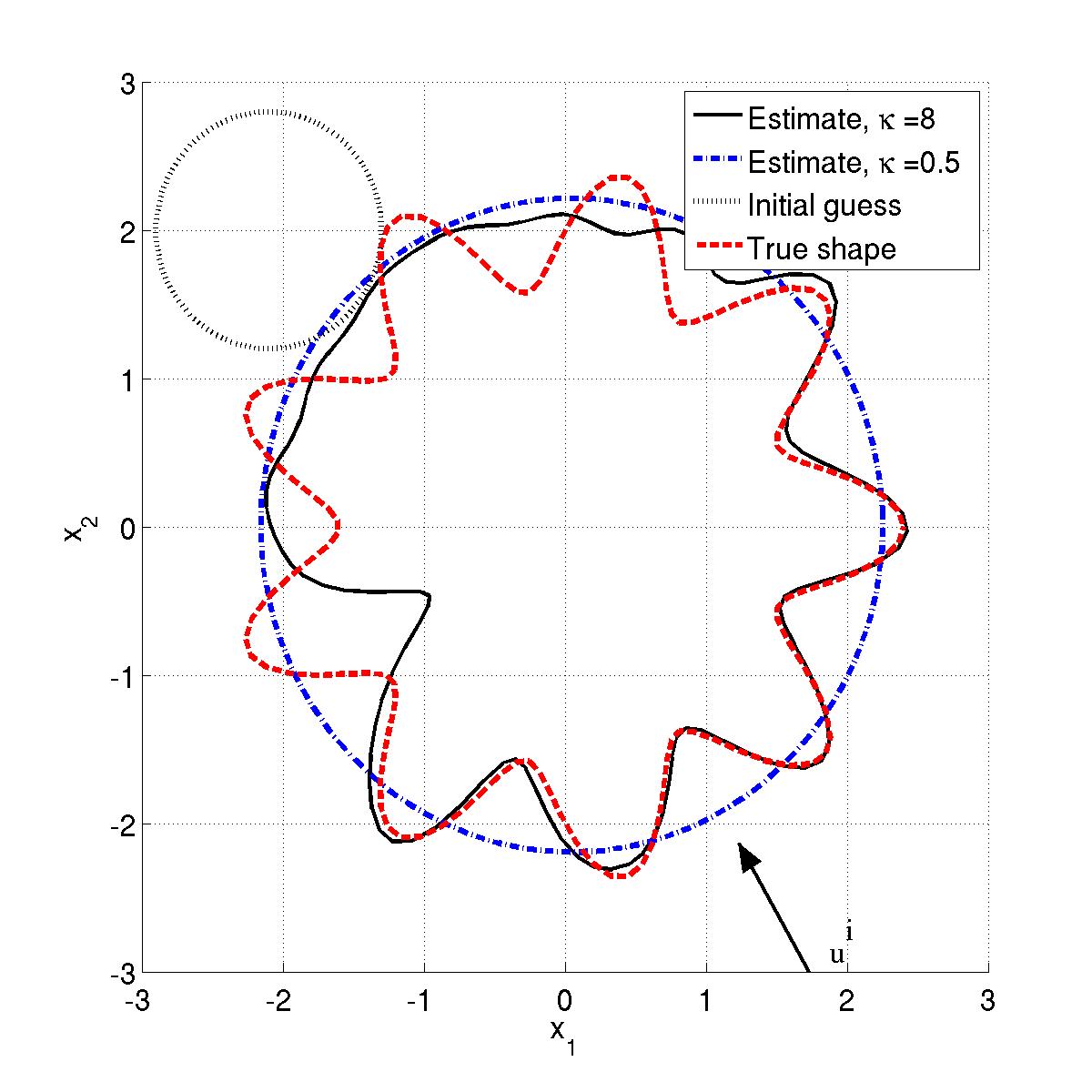}}
  \subfloat[]{\includegraphics[width= 0.48\textwidth, height=0.44\textwidth]{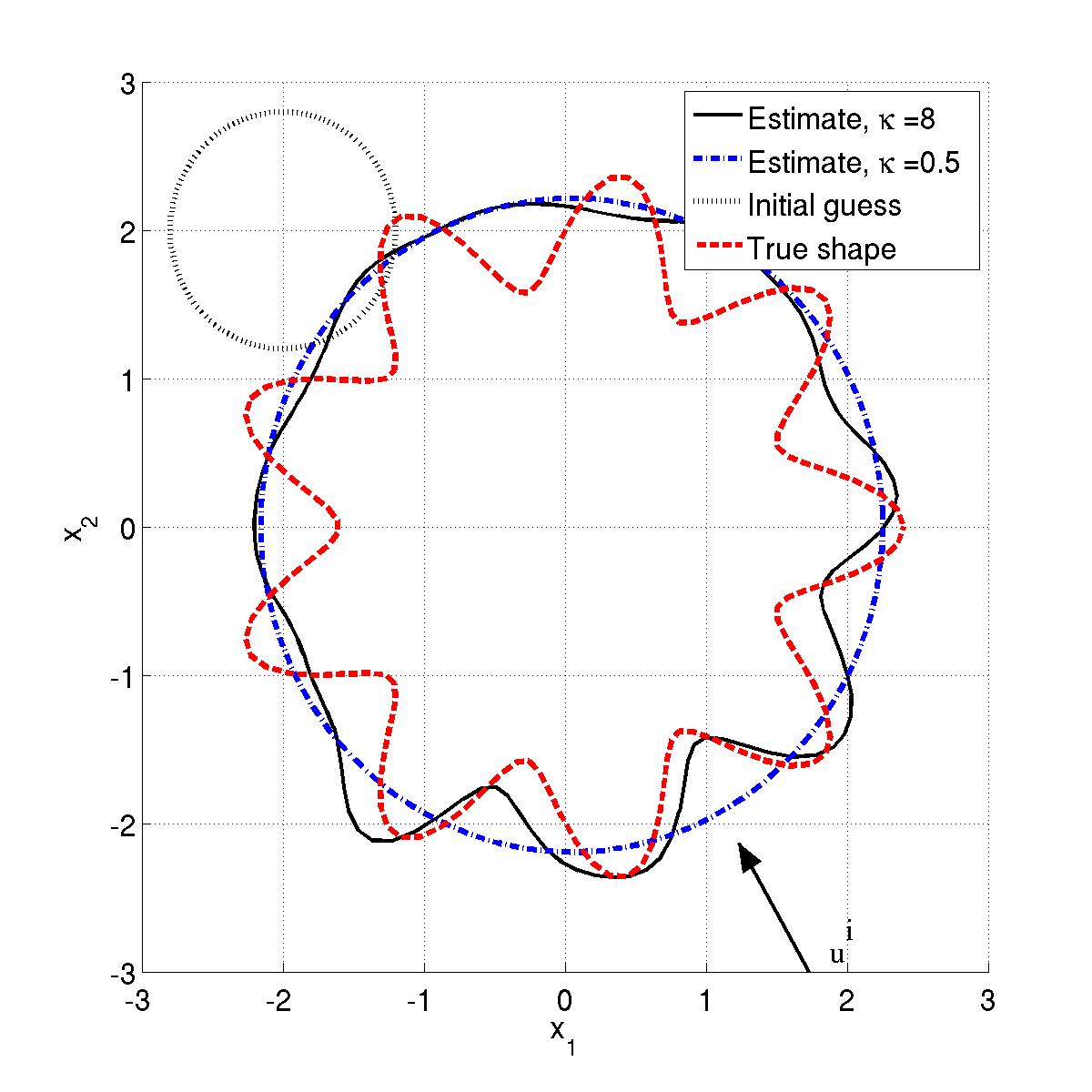}}
  \caption{Reconstruction of obstacle 2 using 16 wavenumbers: (a) 10 Newton iterations; (b) 1 Newton iteration.} 
 \label{fig:4}
\end{figure}

\begin{figure}[ht]
 \centering 
  \subfloat[]{\includegraphics[width= 0.48\textwidth,height= 0.44\textwidth]{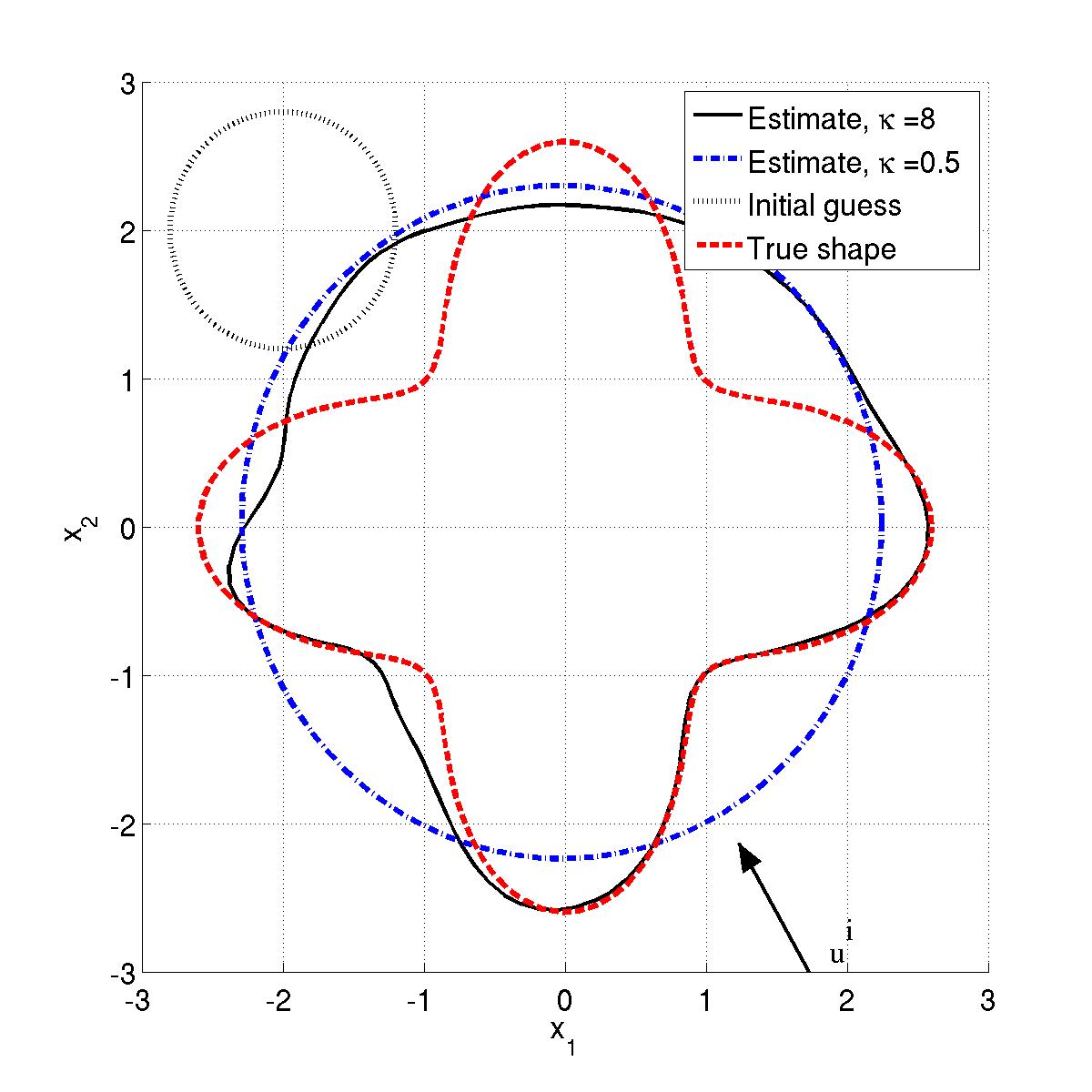}}
  \subfloat[]{\includegraphics[width= 0.48\textwidth, height=0.44\textwidth]{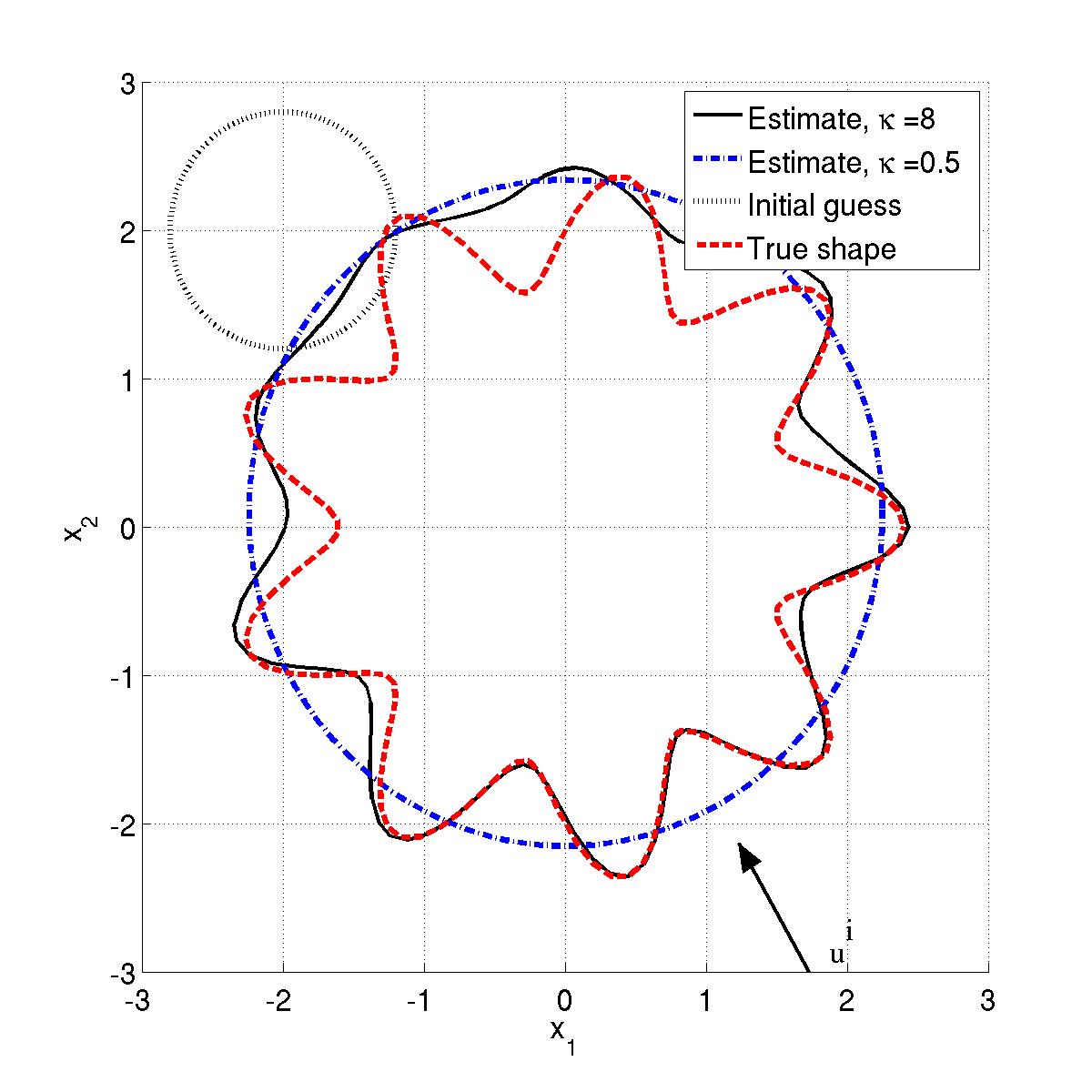}}
  \caption{Reconstruction using the multi-level Newton method: (a) obstacle 1 with 12 wavenumbers; (b) obstacle 2 with 20 wavenumbers.} 
 \label{fig:3}
\end{figure}

To see the performance of the multi-level method of Section \ref{sec:mNm}, we show in Figure \ref{fig:3} the reconstruction of the two obstacles. The reconstruction $r_0$ at the lowest frequency was obtained by just one iteration of the nonlinear least-squares optimization problem. By doing so, we expected that this should not be as good as in the previous tests. The regularization parameter $\alpha$ at the first frequency step was chosen to be $0.04$ which is 4 times larger than that at the other frequencies. Moreover, 5 iterations were used at the first step and 4 iterations were used at the other frequencies. As can be seen, the reconstructions are comparable to Figure \ref{fig:1}(a) and Figure \ref{fig:2}(a) which confirm our theoretical analysis.

\section*{Acknowledgments}
M.~Sini was supported by the Johann Radon Institute for
Computational and Applied Mathematics (RICAM), Austrian Academy of
Sciences and by the Austrian Science Fund (FWF) under the project No.~P22341-N18.
N.~T.~Th\`anh was supported by US Army Research Laboratory and US Army
Research Office grants W911NF-11-1-0399.
%

\begin{thebibliography}{10}

\bibitem{A-R:PAMS2005}
G.~Alessandrini and L.~Rondi.
\newblock Determining a sound-soft polyhedral scatterer by a single far-field
  measurement.
\newblock {\em Proc. Amer. Math. Soc.}, 133(6):1685--1691, 2005.

\bibitem{A-G-K-L-S:2010}
H.~Ammari, J.~Garnier, H.~Kang, M.~Lim, and K.~S\o{}lna.
\newblock Multistatic imaging of extended targets.
\newblock {\em SIAM J. Imaging Sci.}, 5(2):564--600, 2012.

\bibitem{B-T:JCM2010}
G.~Bao and F.~Triki.
\newblock Error estimates for the recursive linearization of inverse medium
  problems.
\newblock {\em Journal of Computational Mathematics}, 28(6):725--744, 2010.

\bibitem{B-B-P-S-S:JEWA2000}
K.~Belkebir, S.~Bonnard, F.~P. an~P.~Sabouroux, and M.~Saillard.
\newblock Validation of {2D} inverse scattering algorithms from multi-frequency
  experimental data.
\newblock {\em Journal of Electromagnetic Waves and Applications},
  14:1637--1667, 2000.

\bibitem{B-I:RS1997}
O.~Bucci and T.~Isernia.
\newblock Electromagnetic inverse scattering: Retrievable information and
  measurement strategies.
\newblock {\em Radio Sci.}, 32(6):2132--2138, 1997.


 \bibitem{Ch-Mo:2008}
 S. ~N Chandler-Wilde and P. Monk.
 \newblock Wave-number-explicit bounds in time-harmonic scattering. 
 \newblock{\em SIAM J. Math. Anal.} 39 (2008), no. 5, 1428--1455.

\bibitem{Chen:IP1997}
Y.~Chen.
\newblock Inverse scattering via {H}eisenberg's uncertainty principle.
\newblock {\em Inverse Problems}, 13(2):253--282, 1997.

\bibitem{C-Y:CAMSB2004}
J.~Cheng and M.~Yamamoto.
\newblock Global uniqueness in the inverse acoustic scattering problem within
  polygonal obstacles.
\newblock {\em Chinese Ann. Math. Ser. B}, 25(1):1--6, 2004.

\bibitem{C-L:IEEE1995}
W.~Chew and J.~Lin.
\newblock A frequency-hopping approach for microwave imaging of large
  inhomogeneous bodies.
\newblock {\em IEEE Microwave and Guided Wave Letters}, 5:439--441, 1995.

\bibitem{C-K:1998}
D.~Colton and R.~Kress.
\newblock {\em Inverse acoustic and electromagnetic scattering theory}.
\newblock Springer-Verlag, Berlin, second edition, 1998.

\bibitem{C-S:IMA1983}
D.~Colton and B.~D. Sleeman.
\newblock Uniqueness theorems for the inverse problem of acoustic scattering.
\newblock {\em IMA J. Appl. Math.}, 31(3):253--259, 1983.

\bibitem{dHQS:2012}
M.~V. {de Hoop}, L.~Qiu, and O.~Scherzer.
\newblock A convergence analysis of a multi-level projected steepest descent
  iteration for nonlinear inverse problems in banach spaces subject to
  stability constraints.
\newblock Preprint, arXiv:1206.3706 [math.NA].

\bibitem{Folland:1992}
G.~B. Folland.
\newblock {\em Fourier Analysis and Its Applications}.
\newblock  Wadsworth \&
  Brooks/Cole Advanced Books \& Software, Pacific Grove, CA, 1992.

\bibitem{Gintides:IP2005}
D.~Gintides.
\newblock Local uniqueness for the inverse scattering problem in acoustics via
  the {F}aber-{K}rahn inequality.
\newblock {\em Inverse Problems}, 21(4):1195--1205, 2005.

\bibitem{Griesmaier:IP2011}
R.~Griesmaier.
\newblock Multi-frequency orthogonality sampling for inverse obstacle
  scattering problems.
\newblock {\em Inverse Problems}, 27(8):085005, 23, 2011.

\bibitem{G-C-B:IP2010}
B.~B. Guzina, F.~Cakoni, and C.~Bellis.
\newblock On the multi-frequency obstacle reconstruction via the linear
  sampling method.
\newblock {\em Inverse Problems}, 26(12):125005, 29, 2010.

\bibitem{H-N-S:pre2010}
N.~Honda, G.~Nakamura, and M.~Sini.
\newblock Analytic extension and reconstruction of obstacles from few
  measurements for elliptic second order operators.
\newblock {\em Math. Ann.}, 
\newblock 355 (2013), no. 2, 401-427.

\bibitem{Isakov:2006}
V.~Isakov.
\newblock {\em Inverse Problems for Partial Differential Equations}.
\newblock Springer, New York, second edition, 2006.

\bibitem{Kirsch:IP1993}
A.~Kirsch.
\newblock The domain derivative and two applications in inverse scattering
  theory.
\newblock {\em Inverse Problems}, 9(1):81--96, 1993.


\bibitem{Mclean:2000}
W.~McLean.
\newblock {\em Strongly Elliptic Systems and Boundary Integral Equations}.
\newblock Cambridge University Press, Cambridge, 2000.


\bibitem{M-Me}
J. ~M. Melenk.
\newblock Mapping properties of combined field Helmholtz boundary integral operators. 
\newblock {\em SIAM J. Math. Anal}. 44 (2012), no. 4, 2599–2636.


\bibitem{Potthast:IP2010}
R.~Potthast.
\newblock A study on orthogonality sampling.
\newblock {\em Inverse Problems}, 26:074015(17pp), 2010.

\bibitem{Ramm:1992}
A.~G. Ramm.
\newblock {\em Multidimensional Inverse Scattering Problems}.
\newblock Longman Scientific \& Technical, Harlow, 1992.

\bibitem{S-S:IPI2008}
E.~Sincich and M.~Sini.
\newblock Local stability for soft obstacles by a single measurement.
\newblock {\em Inverse Probl. Imaging}, 2(2):301--315, 2008.

\bibitem{S-T:IPI2012}
M.~Sini and N.~T. Th\`anh.
\newblock Inverse acoustic obstacle scattering problems using multifrequency
  measurements.
\newblock {\em Inverse Problems and Imaging}, 6(4):749--773, 2012.

\bibitem{S-U:PAMS2004}
P.~Stefanov and G.~Uhlmann.
\newblock Local uniqueness for the fixed energy fixed angle inverse problem in
  obstacle scattering.
\newblock {\em Proc. Amer. Math. Soc.}, 132(5):1351--1354 (electronic), 2004.

\bibitem{T-B-L-H:IP2001}
A.~Tijhuis, K.~Belkebir, A.~Litman, and B.~{de Hon}.
\newblock Multi-frequency distorted-wave {Born} approach to {2D} inverse
  profiling.
\newblock {\em Inverse Problems}, 17:1635--1644, 2001.

\bibitem{T-B-L-H:IEEETGRS2001}
A.~Tijhuis, K.~Belkebir, A.~Litman, and B.~{de Hon}.
\newblock Theoretical and computational aspects of {2-D} inverse profiling.
\newblock {\em IEEE Transactions on Geoscience and Remote Sensing},
  39(6):1316--1330, 2001.

\end{thebibliography}

\end{document}